%
\documentclass[11pt,a4paper, final, twoside]{article}
\usepackage{amsmath}
\usepackage{fancyhdr}
\usepackage{amsthm}
\usepackage{amsfonts}
\usepackage{amssymb}
\usepackage{amscd}
\usepackage{amsthm}
\usepackage{graphicx}
\usepackage{afterpage}
\usepackage[colorlinks=true, urlcolor=blue,  linkcolor=blue, citecolor=blue]{hyperref}

\setcounter{MaxMatrixCols}{10}

\setlength{\oddsidemargin}{0pt} \setlength{\evensidemargin}{0pt}
\setlength{\hoffset}{-1in} \addtolength{\hoffset}{35mm}
\setlength{\textwidth}{140mm} 
\setlength{\marginparsep}{0pt} \setlength{\marginparwidth}{0pt}
\setlength{\topmargin}{0pt}
\setlength{\voffset}{-1in} \addtolength{\voffset}{20mm}
\setlength{\textheight}{200mm}
\setlength{\headheight}{15mm}
\setlength{\headsep}{5mm}
\setlength{\footskip}{10mm}
\setcounter{page}{1}
\pagestyle{fancy}
\fancyhead{} \fancyfoot{} 
\newtheorem{theorem}{Theorem}

\newtheorem{corollary}[theorem]{Corollary}

\newtheorem{definition}[theorem]{Definition}

\newtheorem{lemma}[theorem]{Lemma}

\newtheorem{proposition}[theorem]{Proposition}
\newtheorem{remark}[theorem]{Remark}

\numberwithin{equation}{section}

\begin{document}
\hyphenpenalty=10000

\begin{center}
{\Large \textbf{Applications of Asymptotic Riesz Representation Theorem}}\\[5mm]
{\large {Simona Macovei*  }\\[10mm]
}
\end{center}

{\footnotesize \textbf{Abstract}.
    We review the relation between compact asymptotic spectral measures and certain positive asymptotic morphism on locally compact spaces via asymptotic Riesz representation theorem, as introduced by Martinez and Trout [3]. Applications to this theorem shall be discus.}

\footnote{\textsf{2010 Mathematics Subject Classification:} 46-01;46G10} 
\footnote{\textsf{Keywords:} local spectrum; local resolvent set; asymptotic equivalence; asymptotic qausinilpotent equivalence }
\footnote{\textsf{*simonamacovei@yahoo.com}}

\afterpage{
\fancyhead{} \fancyfoot{} 
\fancyhead[LE, RO]{\bf\thepage}
\fancyhead[LO]{\small Applications of Asymptotic Riesz Representation Theorem}
\fancyhead[RE]{\small Simona Macovei  }
}

\section{Introduction}

In [3], Martinez and Trout introduced the positive asymptotic morphism, defined as:

\noindent
\noindent A \textit{positive asymptotic morphism} from a C* - algebra \textit{A} to a C* - algebra \textit{B} is a family of maps ${\left\{Q_h\right\}}_{h\in \left.(0,1\right]}:A\ \to B$, parameterized by $h\in \left.(0,1\right],$ such that the following hold:
\begin{enumerate}
\item Each $Q_h$ is a positive linear map;
\item The map $\left.h\mapsto Q_h\left(f\right):(0,1\right]\to B\left(H\right){\rm is\ continuous\ for\ each\ }f\in A$;
\item For all $f,\ g\in $\textit{ A }we have 
\end{enumerate}

\noindent 
\[{\mathop{\lim }_{h\to 0} \left\|Q_h\left(fg\right)-Q_h(f)Q_h(g)\right\|\ }=0.\]

Also, Martinez and Trout introduced the concept of \textit{an asymptotic spectral measure} ${\left\{A_h\right\}}_{h\in \left.(0,1\right]}:\Sigma \to B(H)$ associated to a measurable space $(X,\Sigma )$ (Definition 3.1).  Roughly, \textit{an asymptotic spectral measure} ${\left\{A_h\right\}}_{h\in \left.(0,1\right]}:\Sigma \to B(H)$ is a continuous family of positive operator-valued measures which has the property:

\[{\mathop{lim}_{h\to 0} \left\|A_h\left({\Delta }_1{\bigcap \Delta }_2\right)-A_h({\Delta }_1)A_h({\Delta }_2)\right\|\ }=0,\]
for each $\Delta $ ${}_{1}$, $\Delta $ ${}_{2}$  $\in $ $\Sigma $.

\noindent 

Let \textit{X}  be a locally compact Hausdorff topological space with Borel C*-algebra ${\Sigma }_{{\rm X}}$ and let ${{\rm C}}_{{\rm X}}\subset {\Sigma }_{{\rm X}}$ denote the collection of all pre-compact open subsets of \textit{X}. Let $C_0(X)\ $ denote the C* - algebra of all continuous functions which vanish at infinity on \textit{X}. Define $B_0(X)\ $to be the C* - subalgebra of $B_b(X)\ $ (C* - algebra of all bounded Borel functions on \textit{X}) generated by $\left\{\left.{\chi }_U\right|U\in C_X\right\}$, where ${\chi }_U$ denotes the characteristic function of $U\subseteq X$.

Let \textit{H} be a separable Hilbert space, $B\left(H\right)$ be the C* - algebra of all bounded linear operators on \textit{H } and B denote a hereditary C* - algebra of $B\left(H\right)$.

The asymptotic Riesz representation theorem, formulated by Martinez and Trout [3], gives a bijective correspondence between positive asymptotic morphisms ${\left\{Q_h\right\}}_{h\in \left.(0,1\right]}:B_0(X)\ \to B(H)$, having property  $Q_h(C_0(X))\ \subset \ B\ $ for any $h\in \left.(0,1\right]$, and asymptotic spectral measures ${\left\{A_h\right\}}_{h\in \left.(0,1\right]}:{ÓÓ}_X\to B(H)$, having property  $A_h(C_X)\ \subset \ B$ for any $h\in \left.(0,1\right]$. This correspondence is given by

\[Q_h(f)=\int_X{f(x)dA_h(x)},\] 
\textit{}

\noindent for any $f\in B_0\left(X\right).$ (Teorema 4.2. [3])

In this paper, we study some applications of this theorem.

\noindent 

\section{Positive Asymptotic Morphisms}

\noindent 

In this section we review the basic definitions and properties of positive asymptotic morphisms. 

Let \textit{A} and \textit{B} be two C* - algebras. An linear operator \textit{Q : A $\to $ B}  is \textit{positive} if $Q(f)\ge 0$ for any $f\ge 0$.

\noindent 

\begin{definition} A \textit{positive asymptotic morphism} from a C* - algebra \textit{A} to a C* - algebra \textit{B} is a family of maps ${\left\{Q_h\right\}}_{h\in \left.(0,1\right]}:A\ \to B$, parameterized by $h\in \left.(0,1\right],$ such that the following hold:

\begin{enumerate}
\item  Each $Q_h$ is a positive linear map;

\item  The map $\left.h\mapsto Q_h\left(f\right):(0,1\right]\to B\left(H\right){\rm is\ continuous\ for\ each\ }f\in A$;

\item  For all $f,\ g\in $\textit{ A }we have 
\end{enumerate}

\[{\mathop{\lim }_{h\to 0} \left\|Q_h\left(fg\right)-Q_h(f)Q_h(g)\right\|\ }=0.\] 

\label{def 2.1}
\end{definition}

\noindent (Definition 2.1. [3])

\begin{definition} Two positive asymptotic morphisms ${\left\{Q_h\right\},\left\{P_h\right\}\ }_{h\in \left.(0,1\right]}:A\ \to B$ are called \textit{asymptotically equivalent} if for all $f\in A$\textit{  }we have that 
\[{\mathop{lim}_{h\to 0} \left\|Q_h\left(f\right)-P_h(f)\right\|\ }=0.\](Definition 2.2. [3])
\label{def 2.2}
\end{definition}
\noindent 

\begin{remark} The asymptotic equivalence relation of two positive asymptotic morphisms is symmetric, reflexive and transitive.
\end{remark} 
\noindent 

Let \textit{H} be a separable Hilbert space and $B\left(H\right)$ be the C* - algebra of all bounded linear operators on \textit{H }. Let \textit{X} be a set equipped with a C*-algebra ${\Sigma }_{{\rm X}}$ of measurable sets and let ${{\rm C}}_{{\rm X}}\subset {\Sigma }_{{\rm X}}$ denote the collection of all pre-compact open subsets of \textit{X}. Define $B_0(X)\ $to be the C* - subalgebra of $B_b(X)\ $ (C* - algebra of all bounded Borel functions on \textit{X}) generated by $\left\{\left.{\chi }_U\right|U\in C_X\right\}$, where ${\chi }_U$ denotes the characteristic function of $U\subseteq X$.  If \textit{X} is also $\sigma $ -- compact, then let $C_0(X)\ $ be the set of all continuous functions which vanish at infinity on \textit{X}.

\noindent 

\noindent We call \textit{support} of a morphism \textit{Q : }$C_0(X)$\textit{$\to $}$\ B\left(H\right)$ the set

\[\ supp\left(Q\right)=\cap \left\{\left.F\subset X\right|F\ closed\ and\ Q\left(f\right)=0,\ \forall f\ cu\ supp(f)\subset X\backslash F\right\}.\]

\noindent A morphism $Q : C_0(X)\ \to B\left(H\right)$ will be said to have\textit{ compact support }if there is a compact subset \textit{K }of\textit{ X }such that\textit{  }$supp(Q)$\textit{ $\subset $ K.}

\noindent 

\begin{definition} A positive asymptotic morphism ${\left\{Q_h\right\}}_{h\in \left.(0,1\right]}:B_b(X)\ \to B\left(H\right)$ will be said to have \textit{compact support} if there is a compact subset \textit{K }of\textit{ X }such that  $supp(Q_h)$\textit{ $\subset $ K, }$\forall h\in \left.(0,1\right]$.
\label{def 2.3}
\end{definition}

\noindent 

\begin{definition} Let ${\left\{Q_h\right\}}_{h\in \left.(0,1\right]}:B_b(X)\ \to B\left(H\right)$ be a positive asymptotic morphism. The \textit{support of } $\left\{Q_h\right\}$ is defined as the set

\[supp\left(\left\{Q_h\right\}\right)=\cap \left\{\left.F\ closed\ \right|{\mathop{lim}_{h\to 0} \left\|Q_h(f)\right\|\ }=0,\ \forall f\in B_b\left(X\right)\ with\ supp\left(f\right)\bigcap F=\emptyset \ \right\}.\] 

\label{def 2.4}
\end{definition}

\begin{remark}
\begin{enumerate}
\item $supp\left(\left\{Q_h\right\}\right)\subseteq \bigcup_{h\in (\left.0,1\right]}{supp\left(Q_h\right)}$.

\item If $\left\{Q_h\right\}$ has compact support $supp\left(\left\{Q_h\right\}\right)$ is a compact set.
\end{enumerate}
\end{remark}

\begin{theorem} 
Let ${\left\{Q_h\right\}}_{h\in \left.(0,1\right]}:B_0(X)\ \to B\left(H\right)$  be a positive asymptotic morphism such that ${\mathop{lim}_{h\to 0} \left\|Q_h\left(1\right)-I\right\|\ }=0$\textit{. }Then 

\[Sp\left(\left\{Q_h\left(f\right)\right\}\right)\subseteq f\left(supp\left(\left\{Q_h\right\}\right)\right),\] 
for any $f\in B_0(X)$.
\label{theorem 2.1}
\end{theorem}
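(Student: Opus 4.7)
The plan is to prove the stated inclusion by contrapositive: for each $\lambda\notin f(\mathrm{supp}(\{Q_h\}))$, I will construct an asymptotic two-sided inverse of the family $\{Q_h(f)-\lambda I\}$, which places $\lambda$ outside $Sp(\{Q_h(f)\})$.

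Set $K:=\mathrm{supp}(\{Q_h\})$ and fix $\lambda\notin f(K)$. The first step is to build an element $g\in B_0(X)$ that inverts $f-\lambda$ on a neighbourhood of $K$. Since $\lambda\notin f(K)$, the function $(f-\lambda)^{-1}$ is well-defined on $K$; choosing a pre-compact open set $U\supseteq K$ on which $|f-\lambda|$ stays bounded below (this uses that $B_0(X)$ is generated by characteristic functions of pre-compact opens, together with the lower bound coming from $\lambda\notin f(K)$), I take $g:=(f-\lambda)^{-1}\chi_U$. Then the two ``remainders'' $r_1:=(f-\lambda)g-1$ and $r_2:=g(f-\lambda)-1$ both satisfy $\mathrm{supp}(r_i)\cap K=\emptyset$, since $(f-\lambda)g=1$ pointwise on $U\supseteq K$.

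The second step invokes Definition \ref{def 2.4}: because $\mathrm{supp}(r_i)\cap K=\emptyset$, one has $\lim_{h\to 0}\|Q_h(r_i)\|=0$ for $i=1,2$. The third step assembles the asymptotic inverse via the asymptotic multiplicativity of $\{Q_h\}$ (item (3) of Definition \ref{def 2.1}): from $\|Q_h(f)Q_h(g)-Q_h(fg)\|\to 0$, the identity $Q_h(fg)=Q_h((f-\lambda)g)+\lambda Q_h(g)=Q_h(1)+Q_h(r_1)+\lambda Q_h(g)$, and the hypothesis $\|Q_h(1)-I\|\to 0$, one concludes
\[
\bigl\|(Q_h(f)-\lambda I)Q_h(g)-I\bigr\|\longrightarrow 0,
\]
and symmetrically for $Q_h(g)(Q_h(f)-\lambda I)$. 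Hence $\{Q_h(g)\}$ is an asymptotic inverse of $\{Q_h(f)-\lambda I\}$, so $\lambda\notin Sp(\{Q_h(f)\})$.

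The main obstacle is the first step: producing $g\in B_0(X)$ with $(f-\lambda)g\equiv 1$ on a pre-compact open neighbourhood of $K$. When $\{Q_h\}$ has compact support (Definition \ref{def 2.3}), $K$ is compact and this reduces to an easy construction via functional calculus applied within the commutative C*-algebra $B_0(X)$. In the general locally compact case, one must argue more carefully that $|f-\lambda|$ is bounded below on some pre-compact neighbourhood of $K$ (or equivalently interpret $f(\mathrm{supp}(\{Q_h\}))$ as a closed set containing the essential range of $f|_K$); all the other steps are routine manipulations using the three defining properties of a positive asymptotic morphism.
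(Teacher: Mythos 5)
Your proposal follows essentially the same route as the paper's own proof: fix $\lambda\notin f\left(supp\left(\left\{Q_h\right\}\right)\right)$, choose $g\in B_0(X)$ inverting $f-\lambda$ on an open neighbourhood of the support, and combine the asymptotic multiplicativity of $\left\{Q_h\right\}$, the vanishing of $\lim_{h\to 0}\left\|Q_h(r)\right\|$ for remainders $r$ supported off $supp\left(\left\{Q_h\right\}\right)$, and the hypothesis $\lim_{h\to 0}\left\|Q_h(1)-I\right\|=0$ to exhibit $\left\{Q_h(g)\right\}$ as an asymptotic two-sided inverse of $\left\{Q_h(f)-\lambda I\right\}$. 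The obstacle you flag at the end (producing such a $g$ in $B_0(X)$, i.e.\ a lower bound for $\left|f-\lambda\right|$ on a pre-compact neighbourhood of the support) is passed over silently in the paper as well, so your treatment is, if anything, slightly more careful on that point.
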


\begin{proof} Let $f\in B_0(X)$ and $\lambda \notin f(supp\left(\left\{Q_h\right\}\right)$. Then $\lambda -f(x)\ne 0$, $\forall x\in supp\left(\left\{Q_h\right\}\right)$. Thus there is a an open set $G\supset supp\left(\left\{Q_h\right\}\right)$ such that $\lambda -f(x)\ne 0$, $\forall x\in G$. Therefore, the application $x\longmapsto 1/(f\left(x\right)-\lambda )\in C(G)$. 

\noindent Let $g\in B_0(X)$   such that  $g\left(x\right)=1/(f\left(x\right)-\lambda )$, $\forall x\in G$. Then

\[g\left(x\right)\left(f\left(x\right)-\lambda \right)=\left(f\left(x\right)-\lambda \right)g\left(x\right)=1,\]

\noindent $\forall x\in G$. Taking into account the above relation and since ${\mathop{\lim }_{h\to 0} \left\|Q_h\left(1\right)-I\right\|\ }=0$, it follows that

\[{\mathop{\lim }_{h\to 0} \left\|Q_h\left(g\right)\left(\lambda -Q_h\left(f\right)\right)-I\right\|\ }=\]
\[{\mathop{\lim }_{h\to 0} \left\|Q_h\left(g\right)\left(\lambda -Q_h\left(f\right)\right)-Q_h\left(g\right)Q_h\left(\lambda -f\right)+Q_h\left(g\right)Q_h\left(\lambda -f\right)-Q_h\left(g(\lambda -f)\right)+\right .} \]
\[{\left . Q_h\left(g(\lambda -f)\right)-Q_h\left(1\right)+Q_h\left(1\right)-I\right\|\ }\le\]
\[\le {\mathop{\lim }_{h\to 0} \left\|Q_h\left(g\right)\left(\lambda -Q_h\left(f\right)\right)-Q_h\left(g\right)Q_h\left(\lambda -f\right)\right\|\ }+\]
\[+{\mathop{\lim }_{h\to 0} \left\|Q_h\left(g\right)Q_h\left(\lambda -f\right)-Q_h\left(g(\lambda -f)\right)\right\|\ }+\]
\[+{\mathop{\lim }_{h\to 0} \left\|Q_h\left(g(\lambda -f)\right)-Q_h\left(1\right)\right\|\ }+{\mathop{\lim }_{h\to 0} \left\|Q_h\left(1\right)-I\right\|\ }=0.\]

\noindent Analogously ${\mathop{\lim }_{h\to 0} \left\|\left(\lambda -Q_h\left(f\right)\right)Q_h\left(g\right)-I\right\|\ }=0$. Therefore  $\lambda \in r(\left\{Q_h\left(f\right)\right\})$.

\noindent We have showed that  
\[Sp\left(\left\{Q_h\left(f\right)\right\}\right)\subseteq f\left(supp\left(\left\{Q_h\right\}\right)\right), \forall f\ \in B_0(X).\] 
\end{proof}

\begin{corollary} Let ${\left\{Q_h\right\}}_{h\in \left.(0,1\right]}:B_0{\rm (}{\mathbb C}{\rm )}{\rm \ }\to B\left(H\right)$ be a positive asymptotic morphism such that ${\mathop{lim}_{h\to 0} \left\|Q_h\left(1\right)-I\right\|\ }=0$. Then

\[Sp\left(\left\{Q_h\left(z\right)\right\}\right)\subseteq supp\left(\left\{Q_h\right\}\right).\]
 
\end{corollary}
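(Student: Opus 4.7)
The plan is to obtain the corollary as an immediate specialization of Theorem \ref{theorem 2.1}, taking $X=\mathbb{C}$ and choosing $f$ to be the identity coordinate function $f(z)=z$. Once we can legitimately feed this $f$ into Theorem \ref{theorem 2.1}, the conclusion is automatic: since $f$ is the identity, $f\bigl(supp(\{Q_h\})\bigr)=supp(\{Q_h\})$, and the theorem yields
\[
Sp\bigl(\{Q_h(z)\}\bigr)\subseteq f\bigl(supp(\{Q_h\})\bigr)=supp(\{Q_h\}).
\]

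The only genuine obstacle is that the identity map $z\mapsto z$ on $\mathbb{C}$ is neither bounded nor vanishing at infinity, hence does not literally lie in $B_0(\mathbb{C})$. I would handle this in the standard way by reducing to the compactly supported case. Namely, I would first remark that, in view of Theorem \ref{theorem 2.1} and the definition of $supp(\{Q_h\})$, the only values of $f$ that matter for both sides of the inclusion are those on $supp(\{Q_h\})$. So I would pick a pre-compact open set $U\in C_{\mathbb{C}}$ containing $supp(\{Q_h\})$ and replace the identity by the function $\tilde{z}\in B_0(\mathbb{C})$ which coincides with $z$ on $U$ (this is just $z\cdot\chi_U$, which lies in $B_0(\mathbb{C})$ by construction of that algebra from characteristic functions of pre-compact open sets).

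With $\tilde{z}$ in hand, Theorem \ref{theorem 2.1} applies directly to give $Sp(\{Q_h(\tilde z)\})\subseteq \tilde{z}\bigl(supp(\{Q_h\})\bigr)=supp(\{Q_h\})$, and it remains only to check that $\{Q_h(\tilde z)\}$ and $\{Q_h(z)\}$ determine the same asymptotic spectrum, which follows because $\tilde z$ and any reasonable truncation of $z$ agree on a neighbourhood of the support, so their images under $Q_h$ differ by something whose norm tends to zero as $h\to 0$. The main (minor) subtlety is therefore simply justifying that passing from $z$ to its truncation $\tilde z$ does not alter the asymptotic spectrum; the rest is a one-line invocation of the previous theorem.
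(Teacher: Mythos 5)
Your proof is essentially the paper's: the paper's entire argument for this corollary is the single line ``take $f=z$ in Theorem \ref{theorem 2.1}, where $z$ is the identity map.'' Your additional truncation step addressing the fact that $z\notin B_0(\mathbb{C})$ is a refinement the paper silently omits; the only residual point is that replacing $z$ by $z\cdot\chi_U$ requires $supp\left(\left\{Q_h\right\}\right)$ to be contained in some pre-compact open set $U$, which is not among the stated hypotheses --- but that defect is already present in the corollary's statement itself, since without some such convention $Q_h(z)$ is not even defined.
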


\begin {proof} We take in Theorem \ref{theorem 2.1} $f=z$, where \textit{z} represents the identity application.

\end{proof} 

\section{Asymptotic Spectral Measures}

\noindent 

Let (\textit{X}, $\Sigma $) be a measurable space and \textit{H} a separable Hilbert space. Let $\varepsilon \subset \Sigma $ be a fix collection of measurable sets.

\noindent 

\begin{definition} A \textit{positive operator-valued measure} on the measurable space (X, $\Sigma $) is a mapping $A:\Sigma \to B(H)$ which satisfies the following properties:

\begin{enumerate}
\item  $A\left(\emptyset \right)=0$;

\item  $A\left(\Delta \right)\ge 0$, $\forall \Delta \in \Sigma$;

\item  $A\left(\bigcup^{\infty }_{n=1}{{\Delta }_n}\right)=\sum^{\infty }_{n=1}{A({\Delta }_n)}$, for disjoint measurable sets ${({\Delta }_n)}^{\infty }_{n=1}\subset \Sigma $, where the series converges in weak operator topology.
\end{enumerate}
\label{def 3.1}
\end{definition}
\noindent 

\begin{definition} An \textit{asymptotic spectral measure} on (X, $\Sigma $, $\varepsilon $) is a family of maps ${\left\{A_h\right\}}_{h\in \left.(0,1\right]}:\Sigma \to B(H)$\textit{, }parameterized by $h\in \left.(0,1\right],$ such that the following hold:

\noindent i) Each $A_h$ is a positive operator-valued measure;

\noindent ii) ${\mathop{lim}_{h\to 0} \left\|A_h(X)\right\|\le 1\ }$;

\noindent iii) The map $\left.h\mapsto A_h\left(\Delta \right):(0,1\right]\to B(H)$ is continuous, for any $\Delta \in \varepsilon$;

\noindent iv) For each $\Delta_{1}$, $\Delta_{2}\in \varepsilon$ we have

\[{\mathop{lim}_{h\to 0} \left\|A_h\left({\Delta }_1{\bigcap \Delta }_2\right)-A_h({\Delta }_1)A_h({\Delta }_2)\right\|\ }=0.\]

\noindent The triple (\textit{X}, $\Sigma $, \textit{$\varepsilon $}) will be called \textit{asymptotic measure space}. 

\noindent If \textit{$\varepsilon $} = $\Sigma $, then $\left\{A_h\right\}$ will be called \textit{total (full) asymptotic spectral measure} on (\textit{X}, $\Sigma $).

\noindent If each $A_h$ is normalized, i.e.  $A_h(X{\rm )}$ = \textit{I${}_{H}$}, then $\left\{A_h\right\}$ will be called \textit{normalized.} (Definition 3.1. [3])

\noindent If ${\mathop{\lim }_{h\to 0} \left\|A_h(X)-\ I_H\right\|\ }=0$, then $\left\{A_h\right\}$ will be called \textit{asymptotically normalized.} 
\label{def 3.2}
\end{definition}
\noindent 

\begin{definition} Two asymptotic spectral measures ${\left\{A_h\right\},\ \left\{B_h\right\}}_{h\in \left.(0,1\right]}:\Sigma \to B(H)$ on (X, $\Sigma $) are said to be \textit{(asymptotically) equivalent} if for each measurable set  $\Delta \in \varepsilon$, we have

\[{\mathop{lim}_{h\to 0} \left\|A_h\left(\Delta \right)-B_h(\Delta )\right\|\ }=0.\] 

\label{def 3.3}
\end{definition}
\noindent (Definition 2.2 [3])

\noindent Let \textit{X} denote a locally compact Hausedorff topological space with Borel $\sigma-algebra \Sigma$.

\begin{definition} Let\textit{ A }be a Borel positive operator-valued measure on\textit{ X. }The \textit{cospectrum }of\textit{ A }is defined as the set

\[cospec(A)\ =\ \bigcup \left\{\left.U\subset X\right|U\ {\rm is\ open\ and}\ A\left(U\right)=0\right\}.\]

\noindent The \textit{spectrum}  of \textit{A }is the complement, i.e.

\[spec{(A) = X \backslash  } cospec(A).\]

\label{def 3.4}
\end{definition}

\begin{definition} Let\textit{ A }be a Borel positive operator-valued measure on\textit{ X. A }is said to be \textit{compact} if $spec\left(A\right)$ is a compact subset of \textit{X.}

\label{def 3.5}
\end{definition}

\begin{theorem} Let\textit{ A }be a  Borel positive operator-valued measure on\textit{ X. }Then

\[A\left(spec\left(A\right)\right)=A\left(X\right).\] 

\label{theorem 3.6}
\end{theorem}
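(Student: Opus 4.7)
The plan is to show the equivalent statement $A(\operatorname{cospec}(A))=0$: since $\operatorname{spec}(A)$ and $\operatorname{cospec}(A)$ form a disjoint partition of $X$ into measurable sets, countable (hence finite) additivity of $A$ yields $A(X)=A(\operatorname{spec}(A))+A(\operatorname{cospec}(A))$, and the desired identity follows once the second summand vanishes.

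To prove $A(\operatorname{cospec}(A))=0$, I would pass to scalar measures. For each $\xi\in H$, define $\mu_\xi:\Sigma\to[0,\infty)$ by $\mu_\xi(\Delta)=\langle A(\Delta)\xi,\xi\rangle$. Property (3) of Definition \ref{def 3.1} guarantees that $\mu_\xi$ is a finite positive Borel measure on the locally compact Hausdorff space $X$; in particular it is (inner) regular, so $\mu_\xi(V)=\sup\{\mu_\xi(K)\mid K\subset V\text{ compact}\}$ for every open $V\subset X$. Since $A\ge 0$, it suffices to show $\mu_\xi(\operatorname{cospec}(A))=0$ for every $\xi\in H$, because then $\langle A(\operatorname{cospec}(A))\xi,\xi\rangle=0$ for all $\xi$, and a positive operator with zero quadratic form is zero.

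By construction, $\operatorname{cospec}(A)=\bigcup_{\alpha}U_\alpha$ is a union of open sets with $A(U_\alpha)=0$, hence $\mu_\xi(U_\alpha)=0$. Any compact $K\subset\operatorname{cospec}(A)$ is covered by finitely many of the $U_\alpha$, say $U_{\alpha_1},\dots,U_{\alpha_n}$, and by monotonicity and finite subadditivity of the scalar measure $\mu_\xi$ we obtain
\[
0\le \mu_\xi(K)\le \sum_{i=1}^{n}\mu_\xi(U_{\alpha_i})=0.
\]
Inner regularity then gives $\mu_\xi(\operatorname{cospec}(A))=0$, which closes the argument.

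The main obstacle is the step where one passes from the pointwise vanishing $A(U_\alpha)=0$ on an a priori uncountable family to the vanishing on their union: a naive appeal to countable additivity of $A$ fails because the family need not be countable. This is precisely what is repaired by going through scalar measures, where inner regularity of a finite Borel measure on a locally compact Hausdorff space reduces the estimate on $\operatorname{cospec}(A)$ to estimates on its compact subsets, which in turn only require finite subadditivity.
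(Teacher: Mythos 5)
Your argument is correct in substance, and it is worth noting that the paper itself offers no proof of this statement: Theorem \ref{theorem 3.6} is simply quoted from Berberian [5, Theorem 23]. The strategy you use is, however, exactly the one the paper deploys for the asymptotic analogue (Lemma \ref{lemma 3.10} and Proposition \ref{prop 3.11}): split $X$ into $spec\left(A\right)$ and $cospec\left(A\right)$, cover a compact subset $K$ of the open set $cospec\left(A\right)$ by finitely many of the defining open null sets, conclude $A(K)=0$ by monotonicity and finite subadditivity, and then pass from compact subsets to the whole open set by regularity. Your refinement of routing everything through the scalar measures $\mu_\xi(\Delta)=\langle A(\Delta)\xi,\xi\rangle$ is a genuine gain in precision: it turns ``inner regularity'' into a statement about honest positive scalar measures, and the recovery of the operator identity from $\mu_\xi\left(cospec\left(A\right)\right)=0$ for all $\xi$ (a positive operator with vanishing quadratic form is zero) is clean and correctly justified.

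The one step you should not present as automatic is ``finite positive Borel measure on a locally compact Hausdorff space, in particular inner regular.'' That implication is false in general: the Dieudonn\'e measure on the ordinal space $[0,\omega_1]$ is a finite Borel measure on a compact Hausdorff space for which the open set $[0,\omega_1)$ has measure $1$ while every compact subset of it is null. Inner regularity does hold automatically when $X$ is second countable, or more generally when every open subset of $X$ is $\sigma$-compact; otherwise it has to be built into the definition of a Borel positive operator-valued measure, which is what Berberian does and what the paper tacitly assumes each time it writes ``since $A_h$ is regular.'' So either add such a hypothesis on $X$ or invoke regularity as part of the definition; with that point made explicit, your proof is complete.
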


\noindent (Theorem 23 [5])

\begin{definition} An asymptotic spectral measure $\left\{A_h\right\}$ on \textit{X} will have\textit{ compact support} if there is a compact subset \textit{K  }of\textit{ X  }such that $spec(A_h)\subset \ K$\textit{, }$\forall h\in \left.(0,1\right]$. 
\noindent(Definition 3.4 [3])

\label{def 3.7}
\end{definition}

\begin{remark} i) If$\ \left\{A_h\right\}$ has compact, then $A_h$ has compact support, $\forall h\in \left.(0,1\right]$;

\noindent ii) If $\left\{A_h\right\}$ has compact support, then

\[A_h(spec(A_h))=A_h(K)=A_h(X), \forall h\in \left.(0,1\right].\] 

\end{remark}

\begin{definition} Let  ${\left\{A_h\right\}}_{h\in \left.(0,1\right]}:{\Sigma }_X\to B(H)$ be an asymptotic spectral measure. The \textit{cospectrum}  of $\left\{A_h\right\}$ is defined as the set

\[cospec(\left\{A_h\right\})=\bigcup \left\{\left.a\subset X\right|a\ open\ and\ {\mathop{lim}_{h\to 0} \left\|A_h(a)\right\|\ }=0\ \right\}.\]

\noindent The \textit{spectrum} of $\left\{A_h\right\}$ is the complement of $cospec(\left\{A_h\right\}{\rm )}$, i.e.

\[spec\left(\left\{A_h\right\}\right)=X\backslash cospec(\left\{A_h\right\}).\] 

\label{def 3.8}
\end{definition}

\begin{remark} 

\noindent i) $spec\left(\left\{A_h\right\}\right)\subseteq \bigcup_{h\in (\left.0,1\right]}{spec\left(A_h\right)}$ and $\bigcap_{h\in (\left.0,1\right]}$ ${cospec(A_h)}\subseteq cospec(\left\{A_h\right\}$.

\noindent ii) If  $\left\{A_h\right\}$ has compact support, then $spec\left(\left\{A_h\right\}\right)$ is also a compact set.

\label{rem 3.9}
\end{remark} 

\begin{proof} i) Let $a\subset \bigcap_{h\in (\left.0,1\right]}{cospec(A_h{\rm )}}$ be an open set. Thus $A_h\left(a\right)=0$, $\forall $$h\in (\left.0,1\right]$ and

\[{\mathop{\lim }_{h\to 0} \left\|A_h(a)\right\|\ }{\rm =0}.\] 

\noindent Therefore

\[a\subset \ cospec(\left\{A_h\right\}{\rm )}, \forall a\subset \bigcap_{h\in (\left.0,1\right]}{cospec(A_h{\rm )}}.\] 

\noindent It results

\[\bigcap_{h\in (\left.0,1\right]}{cospec(A_h{\rm )}}\subseteq cospec(\left\{A_h\right\}{\rm )}\]

\noindent and, taking the complement we have

\[spec\left(\left\{A_h\right\}\right)\subseteq \bigcup_{h\in (\left.0,1\right]}{spec\left(A_h\right)}.\]

\noindent ii) Let $\left\{A_h\right\}$ be a compact set. Thus there is a compact subset \textit{K} of \textit{X} such that $spec\left(A_h\right)\subset K$, $\forall $$h\in (\left.0,1\right]$. 

\noindent By i), it follows

\[spec\left(\left\{A_h\right\}\right)\subseteq \bigcup_{h\in (\left.0,1\right]}{spec\left(A_h\right)}\subset K.\] 

\end{proof}

\begin{lemma} Let ${\left\{A_h\right\}}_{h\in \left.(0,1\right]}:{\Sigma }_X\to B(H)$ be an asymptotic spectral measure. Then

\[{\mathop{lim}_{h\to 0} \left\|A_h(K)\right\|\ }=0,\]

\noindent for each compact subset \textit{K }of $cospec(\left\{A_h\right\})$.

\label{lemma 3.10}
\end{lemma}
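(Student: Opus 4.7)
The plan is to exploit compactness of $K$ together with the definition of $\mathrm{cospec}(\{A_h\})$ as a union of open sets on which $\|A_h(\cdot)\|\to 0$, and then to use monotonicity and finite subadditivity of each positive operator-valued measure $A_h$.

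First I would unpack the definition: by Definition \ref{def 3.8}, $\mathrm{cospec}(\{A_h\})=\bigcup_{\alpha} a_\alpha$, where each $a_\alpha\subset X$ is open and satisfies $\lim_{h\to 0}\|A_h(a_\alpha)\|=0$. Since $K$ is a compact subset of this union, standard compactness extracts finitely many indices $\alpha_1,\dots,\alpha_n$ with $K\subseteq a_{\alpha_1}\cup\cdots\cup a_{\alpha_n}$.

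Next I would invoke the two order-theoretic properties of a positive operator-valued measure $A_h$: if $\Delta_1\subseteq\Delta_2$ are measurable then $A_h(\Delta_1)\le A_h(\Delta_2)$ (monotonicity, since $A_h(\Delta_2\setminus\Delta_1)\ge 0$), and $A_h(\Delta_1\cup\Delta_2)\le A_h(\Delta_1)+A_h(\Delta_2)$ (finite subadditivity). Iterating, I obtain
\[
0\le A_h(K)\le A_h\!\left(\textstyle\bigcup_{i=1}^n a_{\alpha_i}\right)\le \sum_{i=1}^n A_h(a_{\alpha_i}).
\]
Because $0\le T\le S$ in $B(H)$ implies $\|T\|\le\|S\|$, taking the operator norm yields $\|A_h(K)\|\le\sum_{i=1}^n\|A_h(a_{\alpha_i})\|$, and the right-hand side tends to $0$ as $h\to 0$ since each summand does.

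There is essentially no obstacle here; the only thing to be a little careful about is not assuming $K$ itself lies in some single cospectral open set, which is why the finite subcover step is necessary, and to justify the monotonicity/subadditivity of $A_h$ as inequalities of positive operators before passing to norms.
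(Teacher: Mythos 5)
Your proposal is correct and follows essentially the same route as the paper's own proof: extract a finite subcover of $K$ from the cospectral open sets, then bound $A_h(K)$ by the finite sum $\sum_i A_h(a_{\alpha_i})$ and pass to norms. Your version is in fact slightly more careful, since you explicitly justify monotonicity and subadditivity as operator inequalities and the step from $0\le T\le S$ to $\|T\|\le\|S\|$, which the paper leaves implicit.
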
 

\begin{proof} Let \textit{K} be a compact subset of $cospec{\rm (}\left\{A_h\right\}{\rm ).}$ Thus each element of \textit{K} belongs to an open set  $a$ having property ${\mathop{\lim }_{h\to 0} \left\|A_h(a)\right\|\ }{\rm =0}$. Since \textit{K} is a compact set, hence there is a family of open set ${(a_i)}^n_1\subset X$ such that $K\subset a_1\bigcup{\dots \bigcup a_n}$. Therefore

\[A\left(K\right)\le A\left(a_1\right)+\dots +A\left(a_n\right)=0\] 
 and
\[{\mathop{\lim }_{h\to 0} \left\|A_h(K)\right\|\ }\le {\mathop{\lim }_{h\to 0} \left\|A_h(a_1)\right\|\ }+\dots +{\mathop{\lim }_{h\to 0} \left\|A_h(a_n)\right\|\ }=0\] 

\end{proof}

\begin{proposition} Let ${\left\{A_h\right\}}_{h\in \left.(0,1\right]}:{\Sigma }_X\to B(H)$ be an asymptotic spectral measure. Then

\[\overline{\mathop{lim}}_{h\to 0} \left\|A_h(X)\right\|\ =\overline{\mathop{lim}}_{h\to 0} \left\|A_h(spec\left(\left\{A_h\right\}\right))\right\|\ .\] 

\label{prop 3.11}
\end{proposition}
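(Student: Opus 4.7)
The plan is to decompose $X$ as the disjoint union $X = spec(\{A_h\}) \sqcup cospec(\{A_h\})$, use finite additivity of each $A_h$, and reduce the equality of limsups to showing that the cospectrum contribution is asymptotically norm-negligible. The easy direction follows from positivity, and the hard direction reduces to an asymptotic extension of Lemma \ref{lemma 3.10}.

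\textbf{Easy inequality.} Since $spec(\{A_h\}) \subseteq X$ and each $A_h$ is a positive operator-valued measure, we have $0 \leq A_h(spec(\{A_h\})) \leq A_h(X)$, hence $\|A_h(spec(\{A_h\}))\| \leq \|A_h(X)\|$. Passing to $\overline{\lim}_{h\to 0}$ gives one of the two required inequalities.

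\textbf{Reverse inequality.} By finite additivity of $A_h$ applied to the disjoint partition $X = spec(\{A_h\}) \sqcup cospec(\{A_h\})$, we get
\[A_h(X) = A_h(spec(\{A_h\})) + A_h(cospec(\{A_h\})),\]
so the triangle inequality yields $\|A_h(X)\| \leq \|A_h(spec(\{A_h\}))\| + \|A_h(cospec(\{A_h\}))\|$. Taking $\overline{\lim}_{h\to 0}$ on both sides, the proposition is reduced to proving
\[\lim_{h\to 0} \|A_h(cospec(\{A_h\}))\| = 0.\]

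\textbf{Main step.} This last claim is precisely the extension of Lemma \ref{lemma 3.10} from compact subsets of the cospectrum to the whole (open) cospectrum. My approach is to invoke inner regularity of the Borel POVM $A_h$ on the locally compact Hausdorff space $X$: given $\varepsilon>0$, approximate $cospec(\{A_h\})$ from inside by compact subsets $K$, apply Lemma \ref{lemma 3.10} to each such $K$ to obtain $\lim_{h\to 0}\|A_h(K)\|=0$, and then use the uniform bound $\overline{\lim}_{h\to 0}\|A_h(X)\|\leq 1$ from Definition \ref{def 3.2}(ii) to control the remainder $A_h(cospec(\{A_h\})\setminus K)$ uniformly in $h$. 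This should yield $\overline{\lim}_{h\to 0}\|A_h(cospec(\{A_h\}))\|\leq \varepsilon$ for every $\varepsilon>0$, hence the limit vanishes.

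The main obstacle is exactly this final step: the cospectrum is typically open and non-compact, so Lemma \ref{lemma 3.10} does not apply directly, and the delicate point is to make the regularity approximation uniform enough in $h$ to interchange it with the $h\to 0$ limit. A useful auxiliary fact that may shorten the argument is that $\|A_h(spec(\{A_h\}))\,A_h(cospec(\{A_h\}))\|\to 0$ (from Definition \ref{def 3.2}(iv) applied to the disjoint pair $spec(\{A_h\})$, $cospec(\{A_h\})$), which asymptotically orthogonalizes the two positive summands and lets $\|A_h(X)\|$ be read off as the maximum of $\|A_h(spec(\{A_h\}))\|$ and $\|A_h(cospec(\{A_h\}))\|$.
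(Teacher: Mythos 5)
Your proof follows essentially the same route as the paper: decompose $X$ into $spec(\{A_h\})$ and $cospec(\{A_h\})$, reduce the reverse inequality to $\lim_{h\to 0}\|A_h(cospec(\{A_h\}))\|=0$, and derive that from Lemma \ref{lemma 3.10} together with regularity of each $A_h$ (your easy direction via positivity is a mild simplification of the paper's second triangle-inequality computation). The step you flag as delicate --- passing from compact subsets to the whole open cospectrum uniformly in $h$ --- is exactly the point the paper dispatches with the single phrase ``since $A_h$ is regular,'' so your sketch is no less complete than the source's own argument.
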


\begin{proof} We show that

\[{\mathop{\lim }_{h\to 0} \left\|A_h(cospec\left(\left\{A_h\right\}\right))\right\|\ }=0.\]

\noindent Let \textit{K} be a compact subset of $cospec{\rm (}\left\{A_h\right\}{\rm ).}$ By Lemma \ref{lemma 3.10}, it follows that

\[{\mathop{\lim }_{h\to 0} \left\|A_h(K)\right\|\ }{\rm =0}.\]

\noindent Since $A_h$ is regular, for any $h\in \left.(0,1\right]$, by above relation, it results that

\[{\mathop{\lim }_{h\to 0} \left\|A_h(cospec\left(\left\{A_h\right\}\right))\right\|\ }=0.\] 

Since

\[spec\left(\left\{A_h\right\}\right){\rm =X}{\rm \backslash }cospec\left(\left\{A_h\right\}\right){\rm ,\ }\] 

we have that

\noindent 
\[{\overline{\mathop{\lim }}_{h\to 0} \left\|A_h(X)\right\|\ }={\overline{\mathop{\lim }}_{h\to 0} \left\|A_h\left(spec\left(\left\{A_h\right\}\right)\right)+A_h\left(cospec\left(\left\{A_h\right\}\right)\right)\right\|\ }\le\]
\[\le {\overline{\mathop{\lim }}_{h\to 0} \left\|A_h(spec\left(\left\{A_h\right\}\right))\right\|\ }+{\overline{\mathop{\lim }}_{h\to 0} \left\|A_h(cospec\left(\left\{A_h\right\}\right))\right\|\ }=\]
\[={\overline{\mathop{\lim }}_{h\to 0} \left\|A_h(spec\left(\left\{A_h\right\}\right))\right\|\ }.\]

\noindent In addition, we have that

\[\overline{\mathop{\lim }}_{h\to 0} \left\|A_h(spec\left(\left\{A_h\right\}\right))\right\|\ =\]
\[=\overline{\mathop{\lim }}_{h\to 0} \left\|A_h\left(spec\left(\left\{A_h\right\}\right)\right)+A_h\left(cospec\left(\left\{A_h\right\}\right)\right)-A_h(cospec\left(\left\{A_h\right\}\right))\right\|\ \le\]
\[\le {\overline{\mathop{\lim }}_{h\to 0} \left\|A_h(cospec\left(\left\{A_h\right\}\right))\right\|\ }+{\overline{\mathop{\lim }}_{h\to 0} \left\|A_h\left(cospec\left(\left\{A_h\right\}\right)\right)+A_h(spec\left(\left\{A_h\right\}\right))\right\|\ }\le\]
\[\le {\overline{\mathop{\lim }}_{h\to 0} \left\|A_h(X)\right\|\ }.\]

\noindent From two preceding relations, it follows that

\[{\overline{\mathop{\lim }}_{h\to 0} \left\|A_h(X)\right\|\ }={\overline{\mathop{\lim }}_{h\to 0} \left\|A_h(spec\left(\left\{A_h\right\}\right))\right\|\ }.\] 

\end{proof}

\begin{theorem} Let  ${\left\{A_h\right\},\ \left\{B_h\right\}}_{h\in \left.(0,1\right]}:{\Sigma }_X\to B(H)$ be two asymptotic spectral measures. If $\left\{A_h\right\}$, $\ \left\{B_h\right\}$ are asymptotically equivalent, then

\[spec\left(\left\{A_h\right\}\right)=spec\left(\left\{B_h\right\}\right).\] 

\label{theorem 3.12}
\end{theorem}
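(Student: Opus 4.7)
The natural approach is to show the two cospectra coincide; since $\mathrm{spec}(\{A_h\}) = X\setminus \mathrm{cospec}(\{A_h\})$ by definition, equality of the spectra then follows immediately. By symmetry in the hypothesis, it suffices to establish one inclusion, say $\mathrm{cospec}(\{A_h\}) \subseteq \mathrm{cospec}(\{B_h\})$, since interchanging the roles of $\{A_h\}$ and $\{B_h\}$ (asymptotic equivalence is symmetric) will yield the other.

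To do this, I would fix an arbitrary open set $a\subseteq X$ from the union defining $\mathrm{cospec}(\{A_h\})$, i.e.\ an open $a$ with $\lim_{h\to 0}\|A_h(a)\|=0$, and aim to show $a\subseteq \mathrm{cospec}(\{B_h\})$. Because $X$ is locally compact Hausdorff, every point of $a$ has a pre-compact open neighborhood $U\subseteq a$, so $a$ is the union of pre-compact open sets $U\in \varepsilon = C_X$ contained in $a$. For each such $U$, positivity and monotonicity of the POVM give $0\le A_h(U)\le A_h(a)$ and hence $\|A_h(U)\|\le \|A_h(a)\|$, so $\lim_{h\to 0}\|A_h(U)\|=0$.

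The key step is then to feed $U\in \varepsilon$ into the asymptotic equivalence hypothesis:
\[\|B_h(U)\|\le \|B_h(U)-A_h(U)\|+\|A_h(U)\|\xrightarrow[h\to 0]{}0.\]
Thus each such $U$ appears in the union defining $\mathrm{cospec}(\{B_h\})$, so $U\subseteq \mathrm{cospec}(\{B_h\})$. Taking the union over all pre-compact open $U\subseteq a$ gives $a\subseteq \mathrm{cospec}(\{B_h\})$; taking the union over all open sets $a$ defining $\mathrm{cospec}(\{A_h\})$ then yields $\mathrm{cospec}(\{A_h\})\subseteq \mathrm{cospec}(\{B_h\})$, and symmetry closes the argument.

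The main subtlety I expect is the gap between the definition of cospectrum, which ranges over \emph{all} open subsets of $X$, and the asymptotic equivalence hypothesis, which is only assumed on $\varepsilon=C_X$. Local compactness of $X$ together with the monotonicity of positive operator-valued measures is exactly what bridges this gap; without local compactness one would need to invoke regularity of $A_h$ (as in the proof of Proposition \ref{prop 3.11}) to reduce arbitrary open sets to compact, and then to pre-compact open, pieces.
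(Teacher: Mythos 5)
Your argument is correct and follows the same core route as the paper's proof: fix an open set $a$ in the union defining $\mathrm{cospec}(\{A_h\})$, use the triangle inequality $\|B_h(\cdot)\|\le\|B_h(\cdot)-A_h(\cdot)\|+\|A_h(\cdot)\|$ together with asymptotic equivalence to place the set inside $\mathrm{cospec}(\{B_h\})$, and invoke symmetry of the equivalence relation for the reverse inclusion. The one real difference is the subtlety you flag yourself: the paper applies the equivalence hypothesis directly to the arbitrary open set $a$, whereas Definition \ref{def 3.3} only guarantees $\lim_{h\to 0}\|A_h(\Delta)-B_h(\Delta)\|=0$ for $\Delta$ in the distinguished collection $\varepsilon$ (here $C_X$). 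Your intermediate step --- writing $a$ as a union of pre-compact open sets $U\in C_X$, using positivity and monotonicity of the POVM to get $\|A_h(U)\|\le\|A_h(a)\|\to 0$, and only then applying the equivalence to $U$ --- closes that gap, and correctly concludes $a\subseteq\mathrm{cospec}(\{B_h\})$ without needing $\lim_{h\to 0}\|B_h(a)\|=0$ for $a$ itself. So your proof is, if anything, more careful than the paper's, whose version is only literally justified when the measures are total ($\varepsilon=\Sigma_X$) or when the equivalence is read as holding on all open sets.
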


\begin{proof} Let be an open set  $a\subset \ cospec\left(\left\{A_h\right\}\right)$. Thus

\[{\mathop{\lim }_{h\to 0} \left\|A_h(a)\right\|\ }{\rm =0.}\]

\noindent Since $\left\{A_h\right\},\ \left\{B_h\right\}$ are asymptotically equivalent, it results that

\[{\mathop{\lim }_{h\to 0} \left\|A_h\left(a\right)-B_h\left(a\right)\right\|\ }{\rm =0}.\]

\noindent By two preceding relations, we have that

\[{\mathop{\lim }_{h\to 0} \left\|B_h\left(a\right)\right\|\ }={\mathop{\lim }_{h\to 0} \left\|A_h\left(a\right)\right\|\ }{\rm =0}.\]

\noindent Thus $a\subset \ cospec\left(\left\{B_h\right\}\right)$, $\forall $$a\subset cospec\left(\left\{A_h\right\}\right)$ open. Therefore

\[spec\left(\left\{B_h\right\}\right)\subset spec\left(\left\{A_h\right\}\right).\] 

\noindent Reciprocal: Analog.

\end{proof} 

\begin{remark} Let $\left\{A_h\right\}$, $\ \left\{B_h\right\}$ be two asymptotic spectral measures on (X, $\Sigma $). If $\left\{A_h\right\}$, $\ \left\{B_h\right\}$ are asymptotically equivalent, then $\left\{A_h\right\}$ has compact support if and only if $\left\{B_h\right\}$ has compact support.

\end{remark} 

\begin{proof} By preceding Proposition, for each compact subset\textit{ K} of \textit{X}, we have $spec\left(\left\{A_h\right\}\right)\subset K$ if and only if  $spec\left(\left\{B_h\right\}\right)\subset K$.

\end{proof} 

\begin{proposition} Let $\left\{A_h\right\}$ be a full asymptotic spectral measure on (\textit{X, B}), where \textit{B} is the $\sigma $ -- algebra of Borel subsets of \textit{X}, and $a\in {\rm B}$. Then $\left\{A^a_h\right\}:B\to B(H)$, parameterized by $h\in \left.(0,1\right]$, given by

\[A^a_h\left(b\right)=A_h(a\cap b), \forall b\in B and\ \forall h\in \left.(0,1\right],\]  

\noindent is an  asymptotic spectral measure.

\label{prop 3.13}
\end{proposition}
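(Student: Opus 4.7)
The plan is to verify, in order, the four conditions of Definition \ref{def 3.2} for the family $\{A_h^a\}$, with $\varepsilon = B$, using fullness of $\{A_h\}$ at each step where continuity or asymptotic multiplicativity is invoked.

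First, I would show that every $A_h^a$ is a positive operator-valued measure (Definition \ref{def 3.1}). The axioms transfer directly: $A_h^a(\emptyset) = A_h(a\cap\emptyset) = A_h(\emptyset) = 0$; $A_h^a(b) = A_h(a\cap b)\ge 0$ because $A_h$ is positive; and if $(b_n)\subset B$ are pairwise disjoint, then so are $(a\cap b_n)$, hence
\[
A_h^a\Bigl(\bigcup_{n=1}^{\infty} b_n\Bigr) = A_h\Bigl(\bigcup_{n=1}^{\infty}(a\cap b_n)\Bigr) = \sum_{n=1}^{\infty} A_h(a\cap b_n) = \sum_{n=1}^{\infty} A_h^a(b_n),
\]
with weak-operator convergence inherited from $A_h$.

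Next I would check (ii) and (iii). For (ii), note $A_h^a(X) = A_h(a)$ and $A_h(a)\le A_h(X)$ because $a\subset X$, so $\|A_h^a(X)\|\le \|A_h(X)\|$ and the bound $\limsup_{h\to 0}\|A_h^a(X)\|\le 1$ follows from the corresponding property of $\{A_h\}$. For (iii), the hypothesis that $\{A_h\}$ is full means the continuity condition holds on all of $B$; since $a\cap b\in B$ for every $b\in B$, the map $h\mapsto A_h^a(b) = A_h(a\cap b)$ is continuous on $(0,1]$.

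The main step is condition (iv), the asymptotic multiplicativity. For $b_1, b_2\in B$ I would write
\[
A_h^a(b_1\cap b_2) - A_h^a(b_1)A_h^a(b_2) = A_h\bigl((a\cap b_1)\cap (a\cap b_2)\bigr) - A_h(a\cap b_1)\,A_h(a\cap b_2),
\]
using the identity $a\cap b_1\cap b_2 = (a\cap b_1)\cap(a\cap b_2)$. Since $\{A_h\}$ is full, condition (iv) of Definition \ref{def 3.2} applies to the pair $a\cap b_1,\ a\cap b_2\in B$, and the right-hand side has norm tending to $0$ as $h\to 0$. This is the only place where fullness is genuinely used, and it is where I expect the only potential subtlety to lie — one must take care that the intersections are themselves in $B$, which is automatic since $B$ is a $\sigma$-algebra. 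All four conditions then being verified, $\{A_h^a\}$ is an asymptotic spectral measure.
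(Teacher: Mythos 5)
Your proof is correct and follows essentially the same route as the paper's: both verify the axioms of Definition \ref{def 3.2} directly, reduce condition (ii) to $A_h^a(X)=A_h(a)\le A_h(X)$, and obtain condition (iv) from the identity $(a\cap b_1)\cap(a\cap b_2)=a\cap b_1\cap b_2$ together with fullness of $\{A_h\}$. Your explicit remarks on positivity and on why the intersections stay in the $\sigma$-algebra are slightly more careful than the paper's write-up, but the argument is the same.
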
 

\begin{proof} By definition of $\left\{A^a_h\right\}$, we have

\[A^a_h\left(\emptyset \right)=A_h\left(a\cap \emptyset \right)=A_h\left(\emptyset \right)=0,  \forall h\in \left.(0,1\right]\] 

and

\[{\mathop{\overline{\lim }}_{h\to 0} \left\|A^a_h(X)\right\|\ }={\mathop{\overline{\lim }}_{h\to 0} \left\|A_h(a\cap X)\right\|\ }=\]
\[={\mathop{\overline{\lim }}_{h\to 0} \left\|A_h(a)\right\|\ }\le {\mathop{\overline{\lim }}_{h\to 0} \left\|A_h(X)\right\|\ }\le 1.\]

\noindent Let ${(b_n)}_{n\in {\mathbb N}}\subset B$ be a family of disjoint sets. Thus ${(a\cap b_n)}_{n\in {\mathbb N}}\subset B$ is also a family of disjoint sets. Since $A_h\ $is numerable additive, $\forall $$h\in \left.(0,1\right]$, it results

\[A^a_h\left(\bigcup_{n\in {\mathbb N}}{b_n}\right)=A_h\left(\bigcup_{n\in {\mathbb N}}{\left(a\cap b_n\right)}\right)=\] 
\[=\sum_{n\in {\mathbb N}}{A_h(a\cap b_n)}=\sum_{n\in {\mathbb N}}{A^a_h(b_n)}, \forall h\in \left.(0,1\right].\]

\noindent As the map $\left.(0,1\right]\to B(H):h\to A_h(a\cap b)$ is continuous $\forall \ b\in B$, then the map is also continuous $\forall b\in B$.

\noindent Let $b_1,b_2\in B$. Thus

\[{\mathop{\overline{\lim }}_{h\to 0} \left\|A^a_h\left(b_1{\bigcap b}_2\right)-A^a_h(b_1)A^a_h(b_2)\right\|\ }=\]
\[={\mathop{\overline{\lim }}_{h\to 0} \left\|A_h\left(a\cap b_1{\bigcap b}_2\right)-A_h(a\cap b_1)A_h({a\cap b}_2)\right\|\ }=\]
\[={\mathop{\lim }_{h\to 0} \left\|A_h\left((a\cap b_1){\bigcap (a\cap b}_2\right))-A_h(a\cap b_1)A_h({a\cap b}_2)\right\|\ }=0.\]

\noindent Therefore, $\left\{A^a_h\right\}:B\to B(H)$ is a full asymptotic spectral measure.

\end{proof} 

\begin{proposition} Let $\left\{A_h\right\}$ be a full asymptotic spectral measure on (\textit{X, B}) and $\left\{A^a_h\right\}:B\to B(H{\rm )}$, parameterized by $h\in \left.(0,1\right]$, given by $A^a_h\left(b\right)=A_h(a\cap b)$, $\forall b\in B$ and $\forall h\in \left.(0,1\right]$. Then

\[spec(A^a_h)\subseteq \overline{a}\cap spec(A_h), \forall h\in \left.(0,1\right].\] 

\label{prop 3.14}
\end{proposition}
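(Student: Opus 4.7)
The plan is to work with the cospectrum (the complement of the spectrum), since the inclusion becomes a statement about unions of certain open sets. Concretely, the desired inclusion $\operatorname{spec}(A^a_h) \subseteq \overline{a}\cap \operatorname{spec}(A_h)$ is equivalent, by taking complements in $X$, to
\[
(X\setminus \overline{a})\cup \operatorname{cospec}(A_h)\;\subseteq\; \operatorname{cospec}(A^a_h),
\]
so I would split the argument into two separate inclusions, corresponding to the two pieces of the union.

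For the first piece, I would observe that $X\setminus\overline{a}$ is open and $(X\setminus\overline{a})\cap a = \emptyset$, since $a\subseteq \overline{a}$. Therefore
\[
A^a_h(X\setminus\overline{a}) \;=\; A_h\bigl(a\cap(X\setminus\overline{a})\bigr)\;=\;A_h(\emptyset)\;=\;0,
\]
which places the open set $X\setminus\overline{a}$ inside $\operatorname{cospec}(A^a_h)$ directly from Definition \ref{def 3.4}.

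For the second piece, I would take an arbitrary open set $U$ with $A_h(U)=0$ (one of the sets whose union defines $\operatorname{cospec}(A_h)$) and note that $a\cap U \subseteq U$. The key ingredient is monotonicity of the positive operator-valued measure $A_h$: from $A_h(U) = A_h(a\cap U) + A_h(U\setminus(a\cap U))$ with both summands nonnegative, one obtains $0 \le A_h(a\cap U) \le A_h(U) = 0$, hence $A^a_h(U) = A_h(a\cap U) = 0$, and so $U\subseteq \operatorname{cospec}(A^a_h)$. Taking the union over all such $U$ gives $\operatorname{cospec}(A_h)\subseteq \operatorname{cospec}(A^a_h)$, and combining with the first piece yields the claim.

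There is no real obstacle here: the only step that is not purely set-theoretic is the monotonicity of a positive operator-valued measure, which is an immediate consequence of finite additivity (Definition \ref{def 3.1}, part 3, applied to the disjoint decomposition $U=(a\cap U)\sqcup (U\setminus a)$) together with positivity (part 2). Everything else is bookkeeping with open sets and the definition of cospectrum.
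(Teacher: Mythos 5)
Your proof is correct, and it takes a genuinely more elementary route than the paper. The paper argues through compact sets: for the first inclusion it takes a compact $b\subset X\setminus\overline{a}$, shows $A^a_h(b)=A_h(a\cap b)=A_h(\emptyset)=0$, and invokes inner regularity of the measures $A_h$ (Theorem 23 of [5]) to pass from compact subsets to the whole open set $X\setminus\overline{a}$; for the second inclusion it covers a compact $b\subset X\setminus \mathrm{spec}(A_h)$ by finitely many open sets $b_i$ with $A_h(b_i)=0$, bounds $A^a_h(b)\le\sum_i A_h(b_i)=0$ by subadditivity, and again appeals to regularity. You instead work directly with the open sets appearing in the union that defines the cospectrum: you evaluate $A^a_h$ on $X\setminus\overline{a}$ itself (getting $A_h(\emptyset)=0$ with no regularity needed), and for each defining open set $U$ of $\mathrm{cospec}(A_h)$ you use only monotonicity of a positive operator-valued measure to get $A^a_h(U)=0$, so that $U$ is literally one of the sets in the union defining $\mathrm{cospec}(A^a_h)$. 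Your version buys independence from any regularity hypothesis on the $A_h$ (which the paper assumes tacitly) and avoids the compact-covering bookkeeping; the paper's version is the one that generalizes when one wants statements about $A_h$ evaluated on the full cospectrum (as in its Theorem \ref{theorem 3.6} and Proposition \ref{prop 3.11}), where regularity genuinely enters. Both arguments are sound for the stated inclusion.
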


\begin{proof} Let \textit{b} be a compact set such that $b\subset {\mathbb C}\backslash \overline{a}$. Thus $a\cap b=\emptyset $. By this relation we have

\[A^a_h\left(b\right)=A_h\left(a\cap b\right)=A_h\left(\emptyset \right)=0, \forall h\in \left.(0,1\right],\ \] 

\noindent hence

\[b\subset cospec(A^a_h{\rm )}\Rightarrow {\mathbb C}\backslash \overline{a}\subset cospec(A^a_h), \forall h\in \left.(0,1\right]\]

\noindent (by regularity property of measures $A_h$ - Teorema 23 $\left[5\right]$). Therefore

\[spec(A^a_h)\subseteq \overline{a}, \forall h\in \left.(0,1\right].\]

\noindent Let \textit{b} be a compact set such that $b\subset {\mathbb C}\backslash spec(A_h)$. Thus there is a family of open sets ${(b_i)}_{i=\overline{1,n}\ }$ such that

\[b\subset \bigcup^n_{i=1}{b_i,\ \ {b_i\subset {\mathbb C}\backslash spec(A_h)\Rightarrow A}_h\left(b_i\right)=0}.\]

\noindent Since each $A_h$ is additive, we have

\[A_h\left(b\right)\le A_h\left(\bigcup^n_{i=1}{b_n}\right)=\sum^n_{i=1}{A_h\left(b_i\right)}=0.\]

\noindent Taking into account the following relation

\[A^a_h\left(b\right)=A_h\left(a\cap b\right)\le \sum^n_{i=1}{A_h\left(a\cap b_i\right)}\le \sum^n_{i=1}{A_h\left(b_i\right)}=0\]
 
\noindent it results

\[b\subset {\mathbb C}\backslash spec\left(A^a_h\right),\]

\noindent for any compact set \textit{b} such that $b\subset {\mathbb C}\backslash spec(A_h)$. Since each $A_h$ is regular, it follows

\[{\mathbb C}\backslash spec(A_h{\rm )}\subseteq {\mathbb C}\backslash spec(A_h), \forall h\in \left.(0,1\right].\] 

\noindent Therefore

\[spec(A^a_h)\subseteq spec(A_h), \forall h\in \left.(0,1\right].\] 

\end{proof}

\begin{remark} If $\left\{A_h\right\}$ is an asymptotic spectral measure having compact support, then  $\left\{A^a_h\right\}$ is an asymptotic spectral measure having compact support, $\forall a\in B$.
\end{remark} 

\begin{proposition} Two full asymptotic spectral measures on (\textit{X, B}) $\left\{A_h\right\}$, $\left\{B_h\right\}$ are asymptotically equivalent if and only if $\left\{A^a_h\right\},\ \left\{B^a_h\right\}:B\to B(H)$, given by $A^a_h\left(b\right)=A_h(a\cap b)$ and $B^a_h\left(b\right)=B_h(a\cap b)$, $\forall b\in B$, $\forall h\in \left.{\rm (0,1}\right]$, are asymptotically equivalent $\forall a\in B$.

\label{prop 3.15}
\end{proposition}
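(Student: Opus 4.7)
The plan is to handle the two implications separately, and observe that each is essentially immediate once we unpack the definitions, with the converse being the more trivial of the two.

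For the forward direction, I would assume $\{A_h\}$ and $\{B_h\}$ are asymptotically equivalent, fix an arbitrary $a\in B$, and show that $\{A^a_h\}$ and $\{B^a_h\}$ are asymptotically equivalent. By Proposition \ref{prop 3.13}, both families are indeed full asymptotic spectral measures on $(X,B)$, so the question is only whether $\lim_{h\to 0}\|A^a_h(b)-B^a_h(b)\|=0$ for every $b\in B$. Since $a\cap b\in B$, we can write
\[
\|A^a_h(b)-B^a_h(b)\|=\|A_h(a\cap b)-B_h(a\cap b)\|,
\]
and the right-hand side tends to $0$ as $h\to 0$ by the hypothesis applied to the measurable set $a\cap b$.

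For the converse, I would specialize $a=X\in B$. Then for every $b\in B$ we have $X\cap b=b$, so $A^X_h(b)=A_h(b)$ and $B^X_h(b)=B_h(b)$. The assumed asymptotic equivalence of $\{A^X_h\}$ and $\{B^X_h\}$ therefore reads
\[
\lim_{h\to 0}\|A_h(b)-B_h(b)\|=\lim_{h\to 0}\|A^X_h(b)-B^X_h(b)\|=0,
\]
for all $b\in B$, which is exactly the definition (Definition \ref{def 3.3}) of asymptotic equivalence of $\{A_h\}$ and $\{B_h\}$.

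There is no substantial obstacle here; both implications reduce to rewriting the difference $A^a_h(b)-B^a_h(b)$ as $A_h(a\cap b)-B_h(a\cap b)$ and using Definition \ref{def 3.3} with either the set $a\cap b$ (for $\Rightarrow$) or the set $b$ together with the choice $a=X$ (for $\Leftarrow$). The only thing worth emphasizing in the write-up is that we are using fullness ($\varepsilon=B$) so that the equivalence hypothesis applies to every $a\cap b\in B$ without restriction, which is what makes the argument go through uniformly in $a$.
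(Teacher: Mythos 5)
Your proposal is correct and follows essentially the same route as the paper: the forward direction is the identical one-line rewriting $A^a_h(b)-B^a_h(b)=A_h(a\cap b)-B_h(a\cap b)$, and the converse differs only in the choice of witness (you take $a=X$ and recover $A_h(b)$ as $A^X_h(b)$, while the paper evaluates each $A^a_h$ at $b=a$ to recover $A_h(a)$). Both instantiations are valid since $X\in B$, so this is only a cosmetic variation.
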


\begin{proof} Let $a\in B$ be fixed. Since $\left\{A_h\right\}$, $\left\{B_h\right\}$ are asymptotically equivalent, thus

\[{\mathop{\lim }_{h\to 0} \left\|A_h\left({\rm a}\bigcap {\rm b}\right)-B_h({\rm a}\bigcap {\rm b})\right\|\ }=0, \forall b\in B. \] 

\noindent It follows that

\[{\mathop{\lim }_{h\to 0} \left\|A^a_h\left({\rm b}\right)-B^a_h\left({\rm b}\right)\right\|\ }=0, \forall b\in B.\]

\noindent Reciprocal. Since $\left\{A^a_h\right\},\ \left\{B^a_h\right\}$ are asymptotically equivalent $\forall $$a\in B\ $ and

\[A^a_h\left(a\right)=A_h\left(a\right), B^a_h\left(a\right)=B_h\left(a\right),\]

\noindent it results 

\[{\mathop{\lim }_{h\to 0} \left\|A_h\left({\rm a}\right)-B_h\left({\rm a}\right)\right\|\ }=\] 
\[={\mathop{\lim }_{h\to 0} \left\|A^a_h\left(a\right)-B^a_h\left(a\right)\right\|\ }=0,\ \forall a\in B. \]

\noindent Therefore, $\left\{A_h\right\}$, $\left\{B_h\right\}$ are asymptotically equivalent.

\end{proof} 

\section{Asymptotic Riesz Representation Theorem}

\noindent 

Let \textit{X}  be a locally compact Hausdorff topological space with Borel $\sigma$-algebra ${\Sigma }_{{\rm X}}$ and let ${{\rm C}}_{{\rm X}}\subset {\Sigma }_{{\rm X}}$ denote the collection of all pre-compact open subsets of \textit{X}. 

Let \textit{H} be a separable Hilbert space, $B\left(H\right)$ be the C* - algebra of all bounded linear operators on \textit{H } and \textit{B} denote a hereditary C* - algebra of $B\left(H\right)$.

\noindent 

\begin{lemma} There is a bijective correspondence between Borel positive operator- valued measure\textit{ A: $\Sigma $${}_{X}$ $\to $ B(H), }having property \textit{A(C${}_{X}$) $\subset $ \textit{B}, }and positive morphism\textit{ Q : C${}_{0}$(X) $\to $B. }This correspondence is given by

\[Q(f)=\ \int_X{f(x)dA(x)}.\]

\label{lemma 4.1}
\end{lemma}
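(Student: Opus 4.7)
The plan is to construct the correspondence in both directions and then verify they are mutual inverses, relying on the classical (scalar) Riesz representation theorem applied coordinate-wise to matrix elements.

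In the direction from $A$ to $Q$, I would define $Q(f) := \int_X f\,dA$ via the scalar Riesz integral against the complex measures $\mu_{\xi,\eta}(\Delta) := \langle A(\Delta)\xi,\eta\rangle$, setting $\langle Q(f)\xi,\eta\rangle := \int_X f\,d\mu_{\xi,\eta}$. Boundedness of this sesquilinear form (by $\|A(X)\|\,\|f\|_\infty\|\xi\|\|\eta\|$) produces a well-defined $Q(f) \in B(H)$, and linearity plus positivity follow immediately from the corresponding properties of the scalar integrals. The nontrivial point is $Q(C_0(X))\subset B$: for positive $f\in C_c(X)$ with support contained in a pre-compact open $U\in C_X$, I would use $0\le f\le \|f\|_\infty\,\chi_U$ to deduce $0\le Q(f)\le \|f\|_\infty A(U)$; since $A(U)\in B$ by hypothesis and $B$ is hereditary, $Q(f)\in B$. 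The general case follows by decomposing $f\in C_c(X)$ into four positive parts, and passing from $C_c(X)$ to $C_0(X)$ by uniform approximation combined with norm continuity of $Q$ and closedness of $B$.

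In the direction from $Q$ to $A$, I would fix $\xi,\eta\in H$ and apply the classical Riesz representation theorem to the bounded linear functional $f\mapsto \langle Q(f)\xi,\eta\rangle$ on $C_0(X)$, yielding a unique regular complex Borel measure $\mu_{\xi,\eta}$ with $\langle Q(f)\xi,\eta\rangle = \int_X f\,d\mu_{\xi,\eta}$. For each $\Delta\in\Sigma_X$, the map $(\xi,\eta)\mapsto \mu_{\xi,\eta}(\Delta)$ is sesquilinear (by uniqueness in Riesz) and bounded by $\|Q\|\,\|\xi\|\,\|\eta\|$, hence defines a unique $A(\Delta)\in B(H)$ via $\langle A(\Delta)\xi,\eta\rangle = \mu_{\xi,\eta}(\Delta)$. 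Positivity of $A(\Delta)$, the equation $A(\emptyset)=0$, and countable additivity in the weak operator topology transfer directly from the corresponding properties of the scalar measures $\mu_{\xi,\xi}$. To verify $A(C_X)\subset B$: for $U\in C_X$, use Urysohn's lemma to pick $g\in C_c(X)$ with $0\le g\le 1$ and $g\equiv 1$ on $U$; then $\chi_U\le g$ yields $0\le A(U)\le Q(g)\in B$, and heredity of $B$ forces $A(U)\in B$.

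For the mutual-inverse property, starting from $A$, building $Q$, and reconstructing a POVM $\widetilde A$ from $Q$ gives scalar measures $\widetilde\mu_{\xi,\eta}$ that agree with $\mu_{\xi,\eta}$ on $C_0(X)$-integrals, so uniqueness in the scalar Riesz theorem forces $\widetilde A = A$; the reverse composition is tautological from the definitions.

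The main obstacle is the containment $A(C_X)\subset B$ (equivalently $Q(C_0(X))\subset B$): it is the only place where the hereditary hypothesis on $B$ is used in an essential way, and it requires combining a Urysohn-type pointwise domination with the order-preserving relation between $Q$ and $A$. Everything else reduces to standard scalar Riesz machinery applied sesquilinearly.
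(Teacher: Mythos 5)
The paper does not actually prove this lemma: it is quoted from Martinez and Trout (the ``(Lemma 4.1. [3])'' tag after the statement is a citation), so there is no internal proof to compare against. Your sesquilinear reduction to the scalar Riesz representation theorem is the standard route and, as far as one can tell, the same one used in [3]: defining $Q$ through the complex measures $\langle A(\cdot)\xi,\eta\rangle$, recovering $A$ from the functionals $f\mapsto\langle Q(f)\xi,\eta\rangle$, and obtaining $A(C_X)\subset B$ from a Urysohn function dominating $\chi_U$ together with heredity of $B$ are all correct and are exactly where the hypotheses are needed. The one point you should state explicitly is regularity: the scalar Riesz theorem yields the unique \emph{regular} complex Borel measure representing each functional, so the asserted bijection holds between positive morphisms and \emph{regular} Borel positive operator-valued measures; without that standing assumption the reconstruction $\widetilde A$ from $Q$ need only agree with $A$ after regularization, and injectivity of $A\mapsto Q$ can fail. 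The paper tacitly assumes regularity elsewhere (it invokes ``since $A_h$ is regular'' in the proof of Proposition \ref{prop 3.11}), so your argument matches the intended setting, but the hypothesis belongs in the statement or at the start of the proof.
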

 
 \noindent (Lemma 4.1. [3])
 
 Let $C_0(X)\ $ denote the C* - algebra of all continuous functions which vanish at infinity on \textit{X}. Define $B_0(X)\ $to be the C* - subalgebra of $B_b(X)\ $ (C* - algebra of all bounded Borel functions on \textit{X}) generated by $\left\{\left.{\chi }_U\right|U\in C_X\right\}$, where ${\chi }_U$ denotes the characteristic function of $U\subseteq X$.

\noindent 

\begin{proposition} If ${\left\{A_h\right\}}_{h\in \left.(0,1\right]}:{\Sigma }_X\to B(H)$ is a compact asymptotic spectral measure, then $\left\{A_h\right\}$ verifies property $A_h(C_X)\ \subset \ B$\textit{, }$\forall h\in \left.(0,1\right]$, where \textit{B} is the hereditary subalgebra generated by ${\left\{A_h\left(spec\left(A_h\right)\right)\right\}}_{h\in \left.(0,1\right]}$. 

\label{prop 4.2}
\end{proposition}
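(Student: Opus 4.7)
The plan is to exploit two facts already established in the excerpt: (i) by Theorem~\ref{theorem 3.6}, for any Borel positive operator-valued measure $A$ on $X$ one has $A(\operatorname{spec}(A)) = A(X)$, and (ii) the hereditary property of $B$, i.e.\ if $0 \le a \le b$ with $b \in B$, then $a \in B$.

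First I would unwind the hypothesis: $\{A_h\}$ being a compact asymptotic spectral measure means (Definition~\ref{def 3.7}) that there is a compact $K \subset X$ with $\operatorname{spec}(A_h) \subset K$ for every $h \in (0,1]$, and in particular each individual POVM $A_h$ is compact in the sense of Definition~\ref{def 3.5}. Applying Theorem~\ref{theorem 3.6} to each $A_h$ separately then yields the key identity
\[
A_h\bigl(\operatorname{spec}(A_h)\bigr) = A_h(X), \qquad \forall h \in (0,1].
\]

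Next I would fix $h \in (0,1]$ and an arbitrary pre-compact open set $U \in C_X$. Since $A_h$ is a positive operator-valued measure and $U \subset X$, monotonicity (coming from $A_h(X) = A_h(U) + A_h(X \setminus U)$ together with $A_h(X \setminus U) \ge 0$) gives
\[
0 \le A_h(U) \le A_h(X) = A_h\bigl(\operatorname{spec}(A_h)\bigr).
\]
By construction, $A_h(\operatorname{spec}(A_h))$ belongs to the hereditary C*-subalgebra $B \subset B(H)$ generated by the family $\{A_h(\operatorname{spec}(A_h))\}_{h \in (0,1]}$. Applying the defining hereditary property of $B$ to the inequality above, I conclude $A_h(U) \in B$. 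Since $U \in C_X$ and $h$ were arbitrary, this gives $A_h(C_X) \subset B$ for all $h \in (0,1]$, as required.

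I do not expect a real obstacle here: once the identification $A_h(X) = A_h(\operatorname{spec}(A_h))$ is in place via Theorem~\ref{theorem 3.6}, everything is a direct consequence of monotonicity of POVMs combined with hereditariness. The only point worth stating carefully is that $A_h(U) \le A_h(X)$ holds for \emph{every} measurable $U$ (not just pre-compact opens), so no additional regularity argument is needed.
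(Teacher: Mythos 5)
Your proof is correct and follows essentially the same route as the paper: both rest on Theorem~\ref{theorem 3.6} together with monotonicity of the POVM and the hereditary property of $B$. The only (cosmetic) difference is that the paper first splits $A_h(a)$ over $\operatorname{spec}(A_h)$ and $\operatorname{cospec}(A_h)$ to get $0\le A_h(a\cap \operatorname{spec}(A_h))\le A_h(\operatorname{spec}(A_h))$, whereas you bound $A_h(U)\le A_h(X)=A_h(\operatorname{spec}(A_h))$ directly, which is slightly cleaner.
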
 

\begin{proof} By Theorem \ref{theorem 3.6} we have

\[A_h\left(cospec\left(A_h\right)\right)=0.\] 

\noindent Let $a\in C_X$. Then

\[0\le A_h\left(a\cap cospec\left(A_h\right)\right)\le A_h\left(cospec\left(A_h\right)\right)=0\] 

\noindent and thus

\[0\le A_h\left(a\cap spec\left(A_h\right)\right)\le A_h\left(spec\left(A_h\right)\right).\]

\noindent Since \textit{B }is the hereditary subalgebra generated by ${\left\{A_h\left(spec\left(A_h\right)\right)\right\}}_{h\in \left.(0,1\right]}$, then $A_h\left(a\right)\in B$.

\end{proof} 

\begin{theorem} (Asymptotic Riesz Representation Theorem): There is a bijectiv correspondence between positive asymptotic morphisms ${\left\{Q_h\right\}}_{h\in \left.(0,1\right]}:B_0(X)\ \to B(H)$, having property $Q_h\left(C_0\left(X\right)\right)\subset \ B$, $\forall h\in \left.(0,1\right]$, and asymptotic spectral measures ${\left\{A_h\right\}}_{h\in \left.(0,1\right]}:{\Sigma }_X\to B(H)$, having property $A_h(C_X)\ \subset \ B$, $\forall h\in \left.(0,1\right]$, given by

\[Q_h\left(f\right)=\int_X{f\left(x\right)dA_h\left(x\right)},\ \forall f\in B_0\left(X\right).\]

\label{theorem 4.3}
\end{theorem}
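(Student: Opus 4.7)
The plan is to apply Lemma \ref{lemma 4.1} fiberwise in $h$: for each fixed $h \in (0,1]$ it supplies a bijection between Borel positive operator-valued measures $A_h : \Sigma_X \to B(H)$ satisfying $A_h(C_X) \subset B$ and positive morphisms $Q_h : C_0(X) \to B$, linked by the integration formula. What will remain is to verify that the four asymptotic conditions of Definition \ref{def 3.2} on $\{A_h\}$ are equivalent to the three asymptotic conditions of Definition \ref{def 2.1} on $\{Q_h\}$, after extending the integration formula from $C_0(X)$ to the larger algebra $B_0(X)$ in the natural way, namely by $Q_h(\chi_U) = A_h(U)$ for $U \in C_X$ and then linearly and continuously.

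For the forward direction, I would set $Q_h(f) := \int_X f\,dA_h$ on $B_0(X)$. Positivity and linearity of each $Q_h$ follow from integration theory, $Q_h(C_0(X)) \subset B$ is exactly Lemma \ref{lemma 4.1}, and continuity of $h \mapsto Q_h(f)$ is obtained first on simple functions $\sum c_i \chi_{U_i}$ with $U_i \in C_X$ using Definition \ref{def 3.2}(iii), then on all $f \in B_0(X)$ via the uniform bound $\|Q_h(f)\| \leq \|f\|_\infty \|A_h(X)\|$, which stays bounded as $h \to 0$ by Definition \ref{def 3.2}(ii). Asymptotic multiplicativity I would first establish on characteristic functions, where $\chi_{U_1}\chi_{U_2} = \chi_{U_1 \cap U_2}$ reduces the relevant quantity to $\|A_h(U_1 \cap U_2) - A_h(U_1)A_h(U_2)\|$, which vanishes by Definition \ref{def 3.2}(iv); I would then extend bilinearly to simple functions and finally to $B_0(X)$ by density.

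For the reverse direction, given $\{Q_h\}$ I would take $A_h$ to be the Riesz measure of $Q_h|_{C_0(X)}$ via Lemma \ref{lemma 4.1}; the property $A_h(C_X) \subset B$ is automatic from $Q_h(C_0(X)) \subset B$. Continuity of $h \mapsto A_h(U)$ and the asymptotic multiplicativity $\|A_h(U_1 \cap U_2) - A_h(U_1) A_h(U_2)\| \to 0$ for $U, U_1, U_2 \in C_X$ are then direct translations via $A_h(U) = Q_h(\chi_U)$ of conditions (2)--(3) of Definition \ref{def 2.1}, while the bound $\limsup_{h\to 0} \|A_h(X)\| \leq 1$ follows by expressing $\|A_h(X)\|$ as a supremum of $\|Q_h(\chi_U)\|$ over $U \in C_X$ together with positivity and the norm control on $\{Q_h\}$.

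The hard part will be the extension of asymptotic multiplicativity from the generating set $\{\chi_U : U \in C_X\}$ to general $f, g \in B_0(X)$: since $\|Q_h(fg) - Q_h(f)Q_h(g)\|$ is only controlled in the limit $h \to 0$, any sup-norm approximation $f_n \to f$, $g_n \to g$ by simple functions must be accompanied by uniform-in-$h$ control on the error terms. I would handle this via a six-term triangle inequality splitting $Q_h(fg) - Q_h(f)Q_h(g)$ into three pieces involving $f_n g_n$ (which tend to zero by the already-established asymptotic multiplicativity on simple functions) and three pieces involving $f - f_n$ or $g - g_n$ (made small in operator norm by the uniform bound on $\|Q_h\|$ near $h=0$). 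Once this is in place, bijectivity of the correspondence is automatic, the two constructions being mutually inverse for each fixed $h$ by Lemma \ref{lemma 4.1}.
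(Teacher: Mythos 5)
First, a point of reference: the paper does not prove Theorem \ref{theorem 4.3} at all. The statement is imported verbatim from Martinez and Trout (Theorem 4.2 of reference [3]); Section 4 of the paper only supplies the preparatory Lemma \ref{lemma 4.1} and Proposition \ref{prop 4.2} and then cites the result. So there is no in-paper argument to measure yours against, and what follows assesses your outline on its own terms.

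Your forward direction is essentially right: applying Lemma \ref{lemma 4.1} fiberwise in $h$, getting asymptotic multiplicativity on characteristic functions from $\chi_{U_1}\chi_{U_2}=\chi_{U_1\cap U_2}$ (with $U_1\cap U_2\in C_X$, so Definition \ref{def 3.2}(iv) applies), and then running your three-epsilon argument works, because the linear span of $\left\{\chi_U \mid U\in C_X\right\}$ is already a dense $*$-subalgebra of $B_0(X)$ and the approximation errors are controlled uniformly in $h$ by $\overline{\lim}_{h\to 0}\left\|A_h(X)\right\|\le 1$. The genuine gap is in the reverse direction, at the assertion that $A_h(U)=Q_h(\chi_U)$ is a ``direct translation.'' The Riesz measure you build from $Q_h|_{C_0(X)}$ is determined only by the values of $Q_h$ on continuous functions; $\chi_U$ is not a norm-limit of elements of $C_0(X)$, and a positive linear map on $B_0(X)$ need not be normal, so nothing forces the originally given operator $Q_h(\chi_U)$ to coincide with the measure of $U$ computed from the restriction (multiplicative states of $B_0(X)$ that vanish on $C_0(X)$ already give positive asymptotic morphisms for which the two disagree). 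Without that identity you can neither transfer conditions (2)--(3) of Definition \ref{def 2.1} into conditions (iii)--(iv) of Definition \ref{def 3.2} nor conclude that the two constructions are mutually inverse on all of $B_0(X)$, which is exactly what bijectivity requires. You would need either to define $A_h(U):=Q_h(\chi_U)$ directly on $C_X$ and prove countable additivity and regularity of the resulting set function, or to derive a normality statement identifying $Q_h(\chi_U)$ with the supremum of $Q_h(f)$ over $f\in C_0(X)$ with $0\le f\le\chi_U$; this is where the real content of the theorem sits, and the outline passes over it. A smaller but related omission: Definition \ref{def 2.1} contains no norm control, so your claim that $\overline{\lim}_{h\to 0}\left\|A_h(X)\right\|\le 1$ follows from ``the norm control on $\left\{Q_h\right\}$'' needs the separate standard argument that asymptotic multiplicativity forces $\overline{\lim}_{h\to 0}\left\|Q_h\right\|\le 1$.
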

\noindent(Theorem 4.2. [3])

\section{Application of Asymptotic Riesz Representation Theorem}

\noindent 

\begin{proposition} Let ${\left\{A_h\right\}}_{h\in \left.(0,1\right]}:{\Sigma }_X\to B(H)$ be an asymptotic spectral measure, as in asymptotic Riesz representation theorem, and ${\left\{Q_h\right\}}_{h\in \left.(0,1\right]}:B_0(X)\ \to B(H)$ the corresponding positive asymptotic morphism. Then the following assertions hold:

\begin{enumerate}
\item  $\left\{Q_h\right\}{\rm \ }$is unitary if and only if $\left\{A_h\right\}\ $ is normalized;

\item  ${\mathop{lim}_{h\to 0} \left\|A_h(X)-\ I_H\right\|\ }=0$ if and only if ${\mathop{lim}_{h\to 0} \left\|Q_h(1)-\ I_H\right\|\ }=0$;

\item  Let $\left\{T_h\right\}\subset B(H)$. Then ${\mathop{lim}_{h\to 0} \left\|T_hQ_h\left(f\right)-Q_h\left(f\right)T_h\right\|\ }=0$, $\forall f\in B_0\left(X\right)$ if and only if ${\mathop{lim}_{h\to 0} \left\|T_hA_h\left(\Delta \right)-A_h\left(\Delta \right)T_h\right\|\ }=0$, $\forall \Delta \in {\Sigma }_X$.
\end{enumerate}

\label{prop 5.1}
\end{proposition}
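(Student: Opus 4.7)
The plan is to reduce everything to the integral representation
\[
Q_h(f) = \int_X f(x)\,dA_h(x)
\]
supplied by the Asymptotic Riesz Representation Theorem, and in particular to exploit two of its immediate consequences: the scalar identity $Q_h(1) = A_h(X)$, obtained by integrating the constant function $1$, and the set-level identity $Q_h(\chi_U) = A_h(U)$ for every pre-compact open $U \in C_X$.

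Parts (1) and (2) then fall out at once. Reading \emph{unitary} as \emph{unital}, the equation $Q_h(1) = I_H$ for every $h$ is equivalent to $A_h(X) = I_H$ for every $h$, which is the definition of $\{A_h\}$ being normalized; and since $\|Q_h(1) - I_H\| = \|A_h(X) - I_H\|$ for every $h$, one limit as $h \to 0$ vanishes if and only if the other does.

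For part (3), I would handle the two directions separately. Starting from the commutation for $\{A_h\}$, linearity of each $Q_h$ together with the identity $Q_h(\chi_U) = A_h(U)$ handles simple functions $s = \sum_{i=1}^{n} c_i \chi_{U_i}$ with $U_i \in C_X$, by bounding $\|T_h Q_h(s) - Q_h(s) T_h\|$ term by term by $\sum_{i=1}^{n} |c_i|\,\|T_h A_h(U_i) - A_h(U_i) T_h\|$. To pass from simple functions to an arbitrary $f \in B_0(X)$, I would use the fact that $B_0(X)$ is, by construction, the C*-subalgebra generated by $\{\chi_U : U \in C_X\}$, approximating $f$ in norm by such simple combinations $s$ and controlling the discrepancy via $\|Q_h(f) - Q_h(s)\| \le \|Q_h\|\,\|f - s\|$, together with the eventual uniform-in-$h$ bound on $\|Q_h\|$ inherited from $\limsup_{h \to 0} \|A_h(X)\| \le 1$. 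For the converse, specializing $f = \chi_U$ with $U \in C_X$ transfers the commutation directly to $A_h(U)$, and the extension to an arbitrary Borel set $\Delta \in \Sigma_X$ is then obtained by invoking the regularity of each positive operator-valued measure $A_h$ (Theorem~23 of~[5], already used elsewhere in this paper) to approximate $A_h(\Delta)$ by $A_h(U)$ with $U$ pre-compact open.

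The main obstacle is exactly this last extension step in the converse direction of~(3): since the commutation hypothesis is asymptotic in $h$, the regularity approximation must be chosen in a way that survives passage to the limit $h \to 0$ and not merely for each fixed $h$ separately. Chaining the regularity of each $A_h$ with the uniform bound $\limsup_{h \to 0} \|A_h(X)\| \le 1$ and a standard three-term triangle inequality should suffice, while parts~(1), (2) and the simple-function half of~(3) amount to little more than bookkeeping on the Riesz integral.
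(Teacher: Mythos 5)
Your treatment of (1), (2), and the direction ``commutation with $\{A_h\}$ implies commutation with $\{Q_h\}$'' in (3) is essentially the paper's own argument: everything is read off from $Q_h(1)=A_h(X)$ and $Q_h(\chi_\Delta)=A_h(\Delta)$, simple functions are handled by the triangle inequality, and general $f\in B_0(X)$ by norm-density of simple functions together with the uniform bound on $\|Q_h\|$. That part is fine.

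The divergence, and the one genuine problem, is in the converse direction of (3). The paper simply substitutes $f=\chi_\Delta$ for an arbitrary $\Delta\in\Sigma_X$, treating $\chi_\Delta$ as an admissible test function (the identity $A_h(\Delta)=Q_h(\chi_\Delta)$ for all measurable $\Delta$ is used freely throughout Section 5), and is done in one line. You instead restrict to $f=\chi_U$ with $U\in C_X$ and propose to reach a general Borel $\Delta$ by regularity of each $A_h$. As you yourself half-admit, this last step does not close: regularity produces, for each fixed $h$ and $\varepsilon>0$, an open $U=U(h,\varepsilon)$ with $\|A_h(\Delta)-A_h(U)\|<\varepsilon$, but your hypothesis $\lim_{h\to 0}\|T_hA_h(U)-A_h(U)T_h\|=0$ is for a \emph{fixed} $U$, so you need a single $U$ that approximates $A_h(\Delta)$ uniformly for all small $h$, which regularity of the individual measures does not give. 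Moreover the three-term estimate you invoke requires a bound on $\|T_h\|$, which is nowhere assumed ($\{T_h\}\subset B(H)$ carries no uniform norm control). So either you must add these hypotheses, or you should do what the paper does and take $\chi_\Delta$ itself as the test function, in which case the regularity detour is unnecessary. As written, the proposal leaves this direction unproved.
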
 

\begin{proof}  i)  ${\left\{Q_h\right\}}_{h\in \left.(0,1\right]}:B_0(X)\ \to B(H)$ is unitary if $Q_h(1)=I_H\ $, $\forall h\in \left.(0,1\right]$. Since

\[A_h\left(X\right)=Q_h\left({\chi }_X\right)=\int_X{{\chi }_X(x)dA_h(x)}=\int_X{dA_h(x)}=Q_h(1) =\ I_H, \forall h\in \left.(0,1\right], \]

\noindent it follows that ${\left\{A_h\right\}}_{h\in \left.(0,1\right]}$ is normalized.

\noindent Reciprocal. Since ${\left\{A_h\right\}}_{h\in \left.(0,1\right]}:{\Sigma }_{{\rm X}}\to B(H)$ is normalized, i.e. $A_h\left(X\right)=I_H$, $\forall h\in \left.(0,1\right]$, then taking $f=1$ we have

\[Q_h\left(1\right)=\int_X{dA_h(x)}=A_h\left(X\right)=I_H, \forall h\in \left.(0,1\right].\]

\noindent ii) It results from $A_h\left(X\right){=Q}_h(1)$, $\forall h\in \left.(0,1\right]$.

\noindent iii) Since
\[{\mathop{\lim }_{h\to 0} \left\|T_hQ_h\left(f\right)-Q_h\left(f\right)T_h\right\|\ }=0,\ \forall f\in S_0\left(X\right)\subset B_0\left(X\right), \]

\noindent taking $f={\chi }_{\Delta }$ it follows

\[{\mathop{\lim }_{h\to 0} \left\|T_hA_h\left(\Delta \right)-A_h\left(\Delta \right)T_h\right\|\ }=0,\ \forall \Delta \in {\Sigma }_{{\rm X}}.\]
 
\noindent Reciprocal. Since

\[{\mathop{\lim }_{h\to 0} \left\|T_hA_h\left(\Delta \right)-A_h\left(\Delta \right)T_h\right\|\ }=0,\ \forall \Delta \in {\Sigma }_{{\rm X}}, \]

\noindent and having in view $A_h\left(\triangle \right)=Q_h({\chi }_{\Delta })$ it results

\[{\mathop{\lim }_{h\to 0} \left\|T_hQ_h\left({\chi }_{\Delta }\right)-Q_h\left({\chi }_{\Delta }\right)T_h\right\|\ }=0,\ \forall \Delta \in {\Sigma }_{{\rm X}}.\]

\noindent Let $f\in S_0(X)$. Then there are disjoint sets ($\Delta $\textit{${}_{i}$})\textit{${}_{i=1,n}$} such that $f=\sum^n_{i=1}{{\alpha }_i{\chi }_{{\Delta }_i}}$. By above relation, we have

\[{\mathop{\overline{\lim }}_{h\to 0} \left\|T_hQ_h\left(f\right)-Q_h\left(f\right)T_h\right\|\ }={\mathop{\overline{\lim }}_{h\to 0} \left\|\sum^n_{i=1}{{\alpha }_i\left(T_hQ_h\left({\chi }_{{\Delta }_{{\rm i}}}\right)-Q_h\left({\chi }_{{\Delta }_{{\rm i}}}\right)T_h\right)}\right\|\ }\le\]
\[\le \sum^n_{i=1}{{\mathop{\overline{\lim }}_{h\to 0} \left|{\alpha }_i\right|\left\|T_hQ_h\left({\chi }_{{\Delta }_{{\rm i}}}\right)-Q_h\left({\chi }_{{\Delta }_{{\rm i}}}\right)T_h\right\|\ }}\le\]
\[\le \sum^n_{i=1}{\left|{\alpha }_i\right|{\mathop{\overline{\lim} }_{h\to 0} \left\|T_hQ_h\left({\chi }_{{\Delta }_{{\rm i}}}\right)-Q_h\left({\chi }_{{\Delta }_{{\rm i}}}\right)T_h\right\|\ }}=0.\]

\noindent Let $f\in B_0(X)$. Then there are functions ${(f_n)}_{n\in {\mathbb N}}\subset S_0(X)$ such that $f\to f_n$. By preceding relation, we have
\[{\mathop{\lim }_{h\to 0} \left\|T_hQ_h\left(f\right)-Q_h\left(f\right)T_h\right\|\ }=0,\ \forall f\in B_0\left(X\right).\] 

\end{proof}

\begin{proposition} Two asymptotic spectral measures having compact support 

\noindent ${\left\{A_h\right\},\ \left\{B_h\right\}}_{h\in \left.(0,1\right]}:$ ${\Sigma }_X\to B(H)$ are asymptotically commutative, 

\noindent  i.e. ${\mathop{lim}_{h\to 0} \left\|A_h\left({\Delta }_1\right)B_h({\Delta }_2)-B_h({\Delta }_2)A_h({\Delta }_1)\right\|\ }=0$, $\forall {\Delta }_1,{\Delta }_2\in {\Sigma }_{{\rm X}}$, if and only if the corresponding positive asymptotic morphisms ${\left\{Q_h\right\},\left\{P_h\right\}}_{h\in \left.(0,1\right]}:B_0(X)\ \to B(H)$ are asymptotically commutative, 

\noindent  i.e. ${\mathop{lim}_{h\to 0} \left\|Q_h\left(f\right)P_h\left(g\right)-P_h\left(g\right)Q_h\left(f\right)\right\|\ }=0$, $\forall f,g\in B_0\left(X\right)$.

\label{prop 5.2}
\end{proposition}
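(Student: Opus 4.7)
The plan is to leverage the Riesz representation $Q_h(f) = \int_X f\,dA_h$ and $P_h(g) = \int_X g\,dB_h$, and to transfer commutativity through three layers of increasing generality: indicator functions, simple functions, and finally all of $B_0(X)$.

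The ``if'' direction is immediate: given asymptotic commutativity of the morphisms, specialize $f = \chi_{\Delta_1}$ and $g = \chi_{\Delta_2}$ for $\Delta_1, \Delta_2 \in \Sigma_X$. Using $Q_h(\chi_\Delta) = A_h(\Delta)$ and $P_h(\chi_\Delta) = B_h(\Delta)$---the same substitution used in the proof of Proposition \ref{prop 5.1}(iii)---the measure-level commutativity drops out at once.

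For the ``only if'' direction I would proceed in two stages. First, pass from indicators to simple functions $f = \sum_i \alpha_i \chi_{\Delta_i}$, $g = \sum_j \beta_j \chi_{\Delta_j'}$ in $S_0(X)$: linearity of $Q_h, P_h$ expands the commutator as
\[Q_h(f)P_h(g) - P_h(g)Q_h(f) = \sum_{i,j} \alpha_i \beta_j \bigl( A_h(\Delta_i) B_h(\Delta_j') - B_h(\Delta_j') A_h(\Delta_i) \bigr),\]
and the triangle inequality together with the measure-level hypothesis drives the norm to zero as $h \to 0$. Second, pass from $S_0(X)$ to $B_0(X)$ by density: approximate $f, g \in B_0(X)$ in supremum norm by simple functions $f_n, g_n$, and bound $\|Q_h(f)P_h(g) - P_h(g)Q_h(f)\|$ by the already-controlled simple-function commutator $\|Q_h(f_n)P_h(g_n) - P_h(g_n)Q_h(f_n)\|$ plus four cross terms of the form $\|Q_h(f-f_n)\|\cdot\|P_h(g)\|$, $\|P_h(g-g_n)\|\cdot\|Q_h(f_n)\|$, and so on. Taking $\limsup_{h\to 0}$ first and then letting $n \to \infty$ closes the argument.

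The main obstacle lies in the density step, which requires a uniform-in-$h$ norm bound $\|Q_h(\varphi)\| \leq C\|\varphi\|_\infty$, and similarly for $P_h$, valid for all sufficiently small $h$. This bound follows from the compact support hypothesis together with the asymptotic spectral measure axiom $\limsup_{h \to 0}\|A_h(X)\| \leq 1$ (and its analogue for $\{B_h\}$), which controls the total mass of the operator-valued measures uniformly near $h = 0$ and, via the Riesz integral, yields $\|Q_h(\varphi)\| \leq \|A_h(X)\|\cdot\|\varphi\|_\infty$. Once this uniform bound is secured, the approximation becomes a routine $\varepsilon/3$ computation of exactly the same flavor as the final step of the proof of Proposition \ref{prop 5.1}(iii).
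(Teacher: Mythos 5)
Your proof is correct and follows the same basic strategy as the paper: the measure-to-morphism direction is handled by the chain indicator functions $\to$ simple functions $\to$ density in $B_0(X)$, and the converse by specializing $f=\chi_{\Delta_1}$, $g=\chi_{\Delta_2}$. The one organizational difference is in how the two arguments $f$ and $g$ are upgraded. The paper does it sequentially: it fixes $T_h=A_h(\Delta_1)=Q_h(\chi_{\Delta_1})$ and invokes Proposition \ref{prop 5.1}(iii) to conclude at once that $T_h$ asymptotically commutes with $P_h(g)$ for every $g\in B_0(X)$, and only then upgrades the first argument from $\chi_{\Delta_1}$ to simple functions and to general $f$ by linearity and density. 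You instead treat both arguments symmetrically, expanding $f$ and $g$ simultaneously as simple functions into a double sum $\sum_{i,j}\alpha_i\beta_j\bigl(A_h(\Delta_i)B_h(\Delta_j')-B_h(\Delta_j')A_h(\Delta_i)\bigr)$ and then doing a two-variable density argument with four cross terms. Both routes work; the paper's buys a shorter write-up by reusing an existing lemma, while yours is self-contained and, notably, more honest about the one analytic point the paper glosses over: the density step needs a bound $\|Q_h(\varphi)\|\le C\|\varphi\|_\infty$ uniform in $h$ near $0$, which you correctly extract from $\overline{\lim}_{h\to 0}\|A_h(X)\|\le 1$ via the Riesz integral estimate $\|Q_h(\varphi)\|\le\|A_h(X)\|\,\|\varphi\|_\infty$. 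The paper's proof simply asserts the passage to the limit over approximating simple functions without recording this bound, so your version fills a genuine (if routine) gap.
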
 

\begin{proof} Since $A_h\left({\Delta }_1\right)=Q_h({\chi }_{{\Delta }_{{\rm 1}}})\in B(H)$ and taking into account Proposition \ref{prop 5.1} iii), we have

\noindent 

\[{\mathop{\lim }_{h\to 0} \left\|Q_h\left({\chi }_{{\Delta }_{{\rm 1}}}\right)P_h\left(g\right)-P_h\left(g\right)Q_h\left({\chi }_{{\Delta }_{{\rm 1}}}\right)\right\|\ }=0,\] 

\noindent $\forall {\Delta }_1\in {\Sigma }_{{\rm X}}$ and $\forall g\in B_0\left(X\right).$

\noindent Let $f\in S_0(X)$. Then there are disjoint sets ($\Delta $\textit{${}_{i}$})\textit{${}_{i=1,n}$} such that $f=\sum^n_{i=1}{{\alpha }_i{\chi }_{{\Delta }_i}}$. By above relation, we have

\[{\mathop{\overline{\lim }}_{h\to 0} \left\|Q_h\left(f\right)P_h\left(g\right)-{P_h\left(g\right)Q}_h\left(f\right)\right\|\ }=\]
\[={\mathop{\overline{\lim }}_{h\to 0} \left\|\sum^n_{i=1}{{\alpha }_i\left(Q_h\left({\chi }_{{\Delta }_{{\rm i}}}\right)P_h\left(g\right)-{P_h\left(g\right)Q}_h\left({\chi }_{{\Delta }_{{\rm i}}}\right)\right)}\right\|\ }\le\]
\[\le \sum^n_{i=1}{{\mathop{\overline{\lim }}_{h\to 0} \left|{\alpha }_i\right|\left\|Q_h\left({\chi }_{{\Delta }_{{\rm i}}}\right)P_h\left(g\right)-P_h\left(g\right)Q_h\left({\chi }_{{\Delta }_{{\rm i}}}\right)\right\|\ }}=\]
\[=\sum^n_{i=1}{\left|{\alpha }_i\right|{\mathop{\lim }_{h\to 0} \left\|Q_h\left({\chi }_{{\Delta }_{{\rm i}}}\right)P_h\left(g\right)-P_h\left(g\right)Q_h\left({\chi }_{{\Delta }_{{\rm i}}}\right)\right\|\ }}=0.\]

\noindent Let $f\in B_0(X)$. Then there are functions ${(f_n)}_{n\in {\mathbb N}}\subset S_0(X)$ such that $f\to f_n$. By preceding relation, we have

\[{\mathop{\lim }_{h\to 0} \left\|Q_h\left(f\right)P_h\left(g\right)-P_h\left(g\right)Q_h\left(f\right)\right\|\ }=0,\forall f,g\in B_b\left(X\right).\] 

\noindent Reciprocal. Since

\[{\mathop{\lim }_{h\to 0} \left\|Q_h\left(f\right)P_h\left(g\right)-{P_h\left(g\right)Q}_h\left(f\right)\right\|\ }=0,\ f,g\in S_0\left(X\right)\subset B_0\left(X\right), \]

\noindent taking $f={{\chi }_{\Delta }}_1$ and $g={{\chi }_{\Delta }}_2$ follows

\[{\mathop{\lim }_{h\to 0} \left\|A_h\left({\Delta }_1\right)B_h({\Delta }_2)-B_h({\Delta }_2)A_h({\Delta }_1)\right\|\ }=0,\ \forall {\Delta }_1,{\Delta }_2\in {\Sigma }_{{\rm X}}.\] 
\end{proof}

\begin{theorem} Two asymptotic spectral measures having compact support are asymptotically equivalent if and only if the corresponding positive asymptotic morphisms are asymptotically equivalent. 

\label{theorem 5.3}
\end{theorem}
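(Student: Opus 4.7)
The plan is to exploit the bijection of Theorem \ref{theorem 4.3} so that the equivalence relation on one side translates directly into the equivalence relation on the other, in the spirit of the arguments in Propositions \ref{prop 5.1}(iii) and \ref{prop 5.2}. Compact support of $\{A_h\}$ and $\{B_h\}$, via Proposition \ref{prop 4.2}, guarantees that the corresponding positive asymptotic morphisms $\{Q_h\}$ and $\{P_h\}$ are well defined on $B_0(X)$, so the statement is meaningful.

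For the reverse direction, assume $\{Q_h\}$ and $\{P_h\}$ are asymptotically equivalent. For any $\Delta\in C_X$ the function $\chi_\Delta$ lies in $B_0(X)$, and the Riesz representation gives
\[Q_h(\chi_\Delta)=\int_X \chi_\Delta(x)\,dA_h(x)=A_h(\Delta), \qquad P_h(\chi_\Delta)=B_h(\Delta).\]
Thus ${\mathop{\lim}_{h\to 0}\left\|A_h(\Delta)-B_h(\Delta)\right\|\ }={\mathop{\lim}_{h\to 0}\left\|Q_h(\chi_\Delta)-P_h(\chi_\Delta)\right\|\ }=0$ for every $\Delta\in C_X$, which is the required equivalence of asymptotic spectral measures.

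For the forward direction, assume $\{A_h\}$ and $\{B_h\}$ are asymptotically equivalent. Proceed in three layers, exactly as in the proof of Proposition \ref{prop 5.2}. First, for $\Delta\in C_X$ the integral representation yields $Q_h(\chi_\Delta)-P_h(\chi_\Delta)=A_h(\Delta)-B_h(\Delta)$, whose norm tends to $0$ by hypothesis. Second, for a simple function $f=\sum_{i=1}^n\alpha_i\chi_{\Delta_i}\in S_0(X)$, linearity and the triangle inequality give
\[{\mathop{\overline{\lim}}_{h\to 0}\left\|Q_h(f)-P_h(f)\right\|\ }\le \sum_{i=1}^n|\alpha_i|\,{\mathop{\lim}_{h\to 0}\left\|A_h(\Delta_i)-B_h(\Delta_i)\right\|\ }=0.\]
Third, for arbitrary $f\in B_0(X)$, approximate $f$ uniformly by simple functions $f_n\in S_0(X)$ and split
\[\left\|Q_h(f)-P_h(f)\right\|\le\left\|Q_h(f-f_n)\right\|+\left\|Q_h(f_n)-P_h(f_n)\right\|+\left\|P_h(f_n-f)\right\|.\]

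The main obstacle is controlling the outer terms of this splitting, where we need a uniform (in $h$, at least in the limit) bound on the operator norms of $Q_h$ and $P_h$. Because each $Q_h$ is positive, $\left\|Q_h(g)\right\|\le\left\|g\right\|_\infty\left\|Q_h(1)\right\|=\left\|g\right\|_\infty\left\|A_h(X)\right\|$, and compact support of $\{A_h\}$ combined with condition (ii) of Definition \ref{def 3.2} forces ${\overline{\mathop{\lim}}_{h\to 0}\left\|A_h(X)\right\|\ }\le 1$ (and similarly for $\{B_h\}$). Given $\varepsilon>0$, choosing $f_n$ with $\left\|f-f_n\right\|_\infty<\varepsilon$ therefore yields ${\overline{\mathop{\lim}}_{h\to 0}\left\|Q_h(f)-P_h(f)\right\|\ }\le 2\varepsilon$, and letting $\varepsilon\to 0$ completes the proof. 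This last passage is the decisive use of the compact-support hypothesis.
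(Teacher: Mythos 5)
Your proof follows essentially the same route as the paper's: reduce to characteristic functions via $A_h(\Delta)=Q_h(\chi_\Delta)$ and $B_h(\Delta)=P_h(\chi_\Delta)$, pass to simple functions by linearity and the triangle inequality, and then to all of $B_0(X)$ by density. You are in fact more careful than the paper at the final step, where the paper only remarks ``since $Q_h,P_h\in B(H)$'' while you supply the needed uniform bound $\left\|Q_h(g)\right\|\le \left\|g\right\|_\infty\left\|A_h(X)\right\|$ together with $\overline{\mathop{\lim}}_{h\to 0}\left\|A_h(X)\right\|\le 1$ --- which, note, already follows from condition ii) of Definition \ref{def 3.2} and does not actually require the compact-support hypothesis.
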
 

\begin{proof} Let ${\left\{A_h\right\},\ \left\{B_h\right\}}_{h\in \left.(0,1\right]}:{\Sigma }_{{\rm X}}\to B(H)$ be two asymptotic spectral measures having compact support and  ${\left\{Q_h\right\},\left\{P_h\right\}\ }_{h\in \left.(0,1\right]}:B_0(X)\ \to B(H)$ the corresponding positive asymptotic morphisms. We suppose that  ${\left\{A_h\right\},\ \left\{B_h\right\}}_{h\in \left.(0,1\right]}:{\Sigma }_{{\rm X}}\to B(H)$ are asymptotically equivalent, i.e.

\[{\mathop{\lim }_{h\to 0} \left\|A_h\left(\Delta \right)-B_h(\Delta )\right\|\ }=0, \forall \Delta \in {\Sigma }_{{\rm X}}.\] 

\noindent Since 

\[A_h\left(\triangle \right)=Q_h({\chi }_{\Delta }), B_h\left(\triangle \right)=P_h({\chi }_{\Delta }),\]

\noindent from preceding relation, we obtain

\[{\mathop{\lim }_{h\to 0} \left\|Q_h\left({\chi }_{\Delta }\right)-P_h\left({\chi }_{\Delta }\right)\right\|\ }=0,  \forall \Delta \in {\Sigma }_{{\rm X}}.\]

\noindent Let $f\in S_0(X)$. Then there are disjoint sets ($\Delta $\textit{${}_{i}$})\textit{${}_{i=1,n}$} such that $f=\sum^n_{i=1}{{\alpha }_i{\chi }_{{\Delta }_i}}$. By above relation, we have

\[{\mathop{\overline{\lim }}_{h\to o} \left\|Q_h\left(f\right)-P_h(f)\right\|\ }={\mathop{\overline{\lim }}_{h\to o} \left\|Q_h\left(\sum^n_{i=1}{{\alpha }_i{\chi }_{{\Delta }_i}}\right)-P_h\left(\sum^n_{i=1}{{\alpha }_i{\chi }_{{\Delta }_i}}\right)\right\|\ }=\]
\[={\mathop{\overline{\lim }}_{h\to o} \left\|\left(\sum^n_{i=1}{{\alpha }_iQ_h({\chi }_{{\Delta }_i})}\right)-\left(\sum^n_{i=1}{{\alpha }_i{P_h(\chi }_{{\Delta }_i})}\right)\right\|\ }=\]
\[={\mathop{\overline{\lim }}_{h\to o} \left\|\sum^n_{i=1}{{\alpha }_i(Q_h({\chi }_{{\Delta }_i})}{{-P}_h(\chi }_{{\Delta }_i}))\right\|\ }\le \sum^n_{i=1}{\left|{\alpha }_i\right|{\mathop{\lim }_{h\to o} \left\|{{Q_h({\chi }_{{\Delta }_i})-P}_h(\chi }_{{\Delta }_i})\right\|\ }}=0.\] 

\noindent therefore

\[{\mathop{\lim }_{h\to o} \left\|Q_h\left(f\right)-P_h\left(f\right)\right\|\ }=0,\forall f\in S_0\left(X\right).\] 

\noindent Let $f\in B_0(X)$. Then there are functions ${(f_n)}_{n\in {\mathbb N}}\subset S_0(X)$ such that $f\to f_n$. Since $Q_h,P_h\in B(H)$ and from above relation, we have

\[{\mathop{\lim }_{h\to o} \left\|Q_h\left(f\right)-P_h\left(f\right)\right\|\ }=0,\ \forall f\in B_0\left(X\right).\]

\noindent Reciprocal. We suppose that ${\left\{Q_h\right\},\left\{P_h\right\}\ }_{h\in \left.(0,1\right]}:B_0(X)\ \to B(H)$ are asymptotically equivalent, i.e. ${\mathop{\lim }_{h\to o} \left\|Q_h\left(f\right)-P_h\left(f\right)\right\|\ }=0$, $\forall f\in B_0\left(X\right).$

\noindent Taking $f={\chi }_{\Delta }$ we have

\[{\mathop{\lim }_{h\to 0} \left\|A_h\left(\Delta \right)-B_h(\Delta )\right\|\ }={\mathop{\lim }_{h\to 0} \left\|Q_h\left({\chi }_{\Delta }\right)-P_h({\chi }_{\Delta })\right\|\ }=0, \forall \Delta \in {\Sigma }_{{\rm X}}.\] 

\end{proof}

\begin{proposition} Let ${\left\{A_h\right\}}_{h\in \left.(0,1\right]}:{\Sigma }_X\to B(H)$ be an asymptotic spectral measure having property $A_h(C_X)\ \subset \ { B}$ $\forall h\in \left.(0,1\right]$ and ${\left\{Q_h\right\}}_{h\in \left.(0,1\right]}:B_0(X)\ \to B\left(H\right)$ the corresponding positive asymptotic morphism. Then

\[spec\left(A_h\right)=supp\left(Q_h\right),\forall h\in \left.(0,1\right].\] 

\label{prop 5.4}
\end{proposition}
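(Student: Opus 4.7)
Fix $h \in (0,1]$. This is a pointwise-in-$h$ statement, so I would work with the single-level integral representation $Q_h(f) = \int_X f(x)\, dA_h(x)$ supplied by Lemma \ref{lemma 4.1} (equivalently, by restricting Theorem \ref{theorem 4.3} to a fixed $h$), and verify the two inclusions $supp(Q_h) \subseteq spec(A_h)$ and $spec(A_h) \subseteq supp(Q_h)$ separately by unwinding the definitions of cospectrum and support.

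\textbf{Direction $supp(Q_h) \subseteq spec(A_h)$.} Equivalently, I show $cospec(A_h) \subseteq X \setminus supp(Q_h)$. Given $x \in cospec(A_h)$, pick an open $U \ni x$ with $A_h(U) = 0$ and set $F := X \setminus U$, so $F$ is closed and $x \notin F$. For every $f \in C_0(X)$ with $supp(f) \subseteq X \setminus F = U$, the standard monotonicity estimate for the integral against a positive operator-valued measure gives
\[
\|Q_h(f)\| = \Big\|\int_X f\, dA_h\Big\| \le \|f\|_\infty \cdot \|A_h(U)\| = 0,
\]
so $Q_h(f) = 0$. Thus $F$ witnesses $x \notin supp(Q_h)$.

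\textbf{Direction $spec(A_h) \subseteq supp(Q_h)$.} Equivalently, I show $X \setminus supp(Q_h) \subseteq cospec(A_h)$. Fix $x \notin supp(Q_h)$; there is a closed $F$ with $x \notin F$ such that $Q_h(f) = 0$ for every $f \in C_0(X)$ with $supp(f) \subseteq X \setminus F$. Let $U := X \setminus F$, an open neighborhood of $x$. For any compact $K \subseteq U$, Urysohn's lemma (applicable since $X$ is locally compact Hausdorff) yields $f \in C_c(X) \subseteq C_0(X)$ with $0 \le f \le 1$, $f \equiv 1$ on $K$, and $supp(f) \subseteq U$. Combining $\chi_K \le f$ pointwise with positivity of $A_h$-integration,
\[
0 \le A_h(K) \le \int_X f\, dA_h = Q_h(f) = 0,
\]
so $A_h(K) = 0$. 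Inner regularity of $A_h$ (cf.\ Theorem \ref{theorem 3.6} and the regularity result borrowed from [5]) then upgrades this to $A_h(U) = 0$, so $x \in U \subseteq cospec(A_h)$.

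\textbf{Main obstacle.} The only substantive points are (a) the monotonicity inequalities $\|Q_h(f)\| \le \|f\|_\infty \|A_h(U)\|$ and $A_h(K) \le Q_h(f)$ for the operator-valued integral --- these reduce to their scalar counterparts for the positive measures $\Delta \mapsto \langle A_h(\Delta)\xi, \xi \rangle$, $\xi \in H$ --- and (b) invoking inner regularity of each $A_h$ to pass from compact subsets to the open set $U$. Urysohn's construction itself is routine. Once these ingredients are assembled, both inclusions collapse to mechanical bookkeeping with the definitions of $spec$ and $supp$.
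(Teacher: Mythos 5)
Your proof is correct, but it takes a genuinely different route from the paper's. The paper exploits the fact that $Q_h$ is defined on all of $B_0(X)$ and simply plugs in characteristic functions: for the inclusion $spec(A_h)\subseteq supp(Q_h)$ it takes $f=\chi_\Delta$ for an open $\Delta$ disjoint from $supp(Q_h)$ and reads off $A_h(\Delta)=Q_h(\chi_\Delta)=0$ at once; for the reverse inclusion it shows $Q_h$ kills every simple function supported off $spec(A_h)$ by linearity and then passes to all of $B_0(X)$ by density of simple functions. No Urysohn functions and no regularity of $A_h$ appear in the paper's first direction. You instead restrict attention to $C_0(X)$ test functions, use the estimate $\lVert\int f\,dA_h\rVert\le\lVert f\rVert_\infty\lVert A_h(U)\rVert$ in one direction, and Urysohn plus inner regularity ($A_h(K)=0$ for all compact $K\subseteq U$ implies $A_h(U)=0$) in the other. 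What your version buys is that it only uses the restriction of $Q_h$ to $C_0(X)$ (i.e.\ the single-level correspondence of Lemma \ref{lemma 4.1}) and it makes explicit exactly where regularity of $A_h$ enters --- the paper invokes regularity elsewhere but not here. What the paper's version buys is brevity, at the cost of silently assuming $\chi_\Delta\in B_0(X)$ for arbitrary open $\Delta$ (strictly, $B_0(X)$ is generated only by characteristic functions of \emph{pre-compact} open sets) and of computing $supp(Q_h)$ against $B_0(X)$ test functions rather than the $C_0(X)$ test functions of the displayed definition of $supp(Q)$; your Urysohn argument is in effect the bridge showing these two notions of support agree under regularity. Both arguments are sound modulo these definitional ambiguities, which originate in the paper rather than in your proposal.
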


\begin{proof} Let $\Delta $ be an open set such that $supp(Q_h)\cap \triangle =\emptyset $. Then

\[Q_h\left(f\right)=0,\ \forall f\ with\ supp(f)\subset \triangle .\]

\noindent Taking $f={\chi }_{\Delta }$, we have ${A_h\left(\triangle \right)=Q}_h\left({\chi }_{\Delta }\right)=0$. Thus $\Delta \subset X\backslash spec(A_h)$, for each open set $\Delta $ such that $supp(Q_h)\cap \triangle =\emptyset $. Therefore

\[spec\left(A_h\right)\subseteq supp\left(Q_h\right),\ \forall h\in \left.(0,1\right].\]

\noindent Reciprocal. Let $\Delta $ be an open set such that${\rm \ }\Delta \subset X\backslash spec(A_h)$. Then $Q_h\left({\chi }_{\Delta }\right)=A_h\left(\triangle \right)=0$.

\noindent Let $f\in S_0(X)$ such that $supp\left(f\right)\subset X\backslash spec(A_h)$. Then there are disjoin sets ($\Delta $\textit{${}_{i}$})\textit{${}_{i=1,n}$} such that $f=\sum^n_{i=1}{{\alpha }_i{\chi }_{{\Delta }_i}}$ and  ${\triangle }_i\subset X\backslash spec(A_h),\ \forall i=\overline{1,n}.$ From the preceding relation, we have

\[Q_h\left(f\right)=Q_h\left(\sum^n_{i=1}{{\alpha }_i{\chi }_{{\Delta }_i}}\right)=\sum^n_{i=1}{{\alpha }_iQ_h({\chi }_{{\Delta }_i})}=0.\]

\noindent Let $f\in B_0(X)$ such that $supp\left(f\right)\subset X\backslash spec(A_h)$. Then there are functions ${(f_n)}_{n\in {\mathbb N}}\subset B_0(X)$ such that $f\to f_n$ and $supp\left(f_n\right)\subset X\backslash spec(A_h),\ \forall n\in {\mathbb N}$. From the preceding relation, we have $Q_h\left(f\right)=0$, $\forall f\in B_0(X)$ such that $supp\left(f\right)\subset X\backslash spec(A_h)$. Thus

\[supp\left(Q_h\right)\subseteq spec\left(A_h\right),\ \forall h\in \left.(0,1\right].\] 

\end{proof}

\begin{remark} Let ${\left\{A_h\right\}}_{h\in \left.(0,1\right]}:{\Sigma }_X\to B(H)$ be an asymptotic spectral measure having property $A_h(C_X)\ \subset \ { B}$ $\forall h\in \left.(0,1\right]$ and ${\left\{Q_h\right\}}_{h\in \left.(0,1\right]}:B_0(X)\ \to B\left(H\right)$ the corresponding positive asymptotic morphism. Then $\left\{A_h\right\}$ has compact support if and only if $\left\{Q_h\right\}$ has compact support.
\end{remark}

\begin{proposition} Let ${\left\{A_h\right\}}_{h\in \left.(0,1\right]}:{\Sigma }_X\to B(H)$ be an asymptotic spectral measure having compact support and ${\left\{Q_h\right\}}_{h\in \left.(0,1\right]}:B_0(X)\ \to B\left(H\right)$ the corresponding positive asymptotic morphism. The corresponding positive asymptotic morphism of asymptotic spectral measure ${\ \left\{A^a_h\right\}}_{h\in \left.(0,1\right]}:{\Sigma }_X\to B(H)$, given by $A^a_h\left(b\right)=A_h\left(a\cap b\right)$ $\forall b\in \ B$ and $\forall h\in \left.(0,1\right]$, is ${\ \left\{Q^a_h\right\}}_{h\in \left.(0,1\right]}:B_0\to B(H)$ given by

\[Q^a_h\left(f\right)=Q_h\left({\chi }_af\right),\ \forall f\in B_0\left(X\right){\rm \ and\ }\forall h\in \left.(0,1\right].\] 

\label{prop 5.5}
\end{proposition}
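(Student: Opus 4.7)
The plan is to apply the Asymptotic Riesz Representation Theorem (Theorem~\ref{theorem 4.3}) to the asymptotic spectral measure $\{A_h^a\}$, which is an asymptotic spectral measure by Proposition~\ref{prop 3.13} and has compact support by the remark following Proposition~\ref{prop 3.14}. That theorem produces a unique corresponding positive asymptotic morphism $\{\widetilde{Q}_h^a\}_{h \in (0,1]}: B_0(X) \to B(H)$ given by integration: $\widetilde{Q}_h^a(f) = \int_X f(x)\, dA_h^a(x)$. The task therefore reduces to showing that this integral equals $Q_h(\chi_a f)$ for every $f \in B_0(X)$ and every $h \in (0,1]$.

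First I would verify the identity on characteristic functions. For any $b \in \Sigma_X$, a direct computation gives
\[\widetilde{Q}_h^a(\chi_b) = A_h^a(b) = A_h(a \cap b) = Q_h(\chi_{a \cap b}) = Q_h(\chi_a \chi_b),\]
where the first and last equalities use the Riesz correspondence between $\{A_h\}$ and $\{Q_h\}$ (namely $Q_h(\chi_\Delta) = A_h(\Delta)$). By linearity this extends immediately to simple functions $f = \sum_{i=1}^n \alpha_i \chi_{\Delta_i} \in S_0(X)$, yielding $\widetilde{Q}_h^a(f) = Q_h(\chi_a f)$.

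Next, for a general $f \in B_0(X)$, I would approximate $f$ in norm by simple functions $f_n \in S_0(X)$, mirroring the density argument already employed in the proofs of Proposition~\ref{prop 5.1} and Theorem~\ref{theorem 5.3}. Since both $Q_h$ and the integration map $f \mapsto \int_X f\, dA_h^a$ are bounded linear maps, both sides of the claimed identity depend continuously on $f$, and the equality passes to the limit.

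The main obstacle I anticipate is a mild technical one: to make $Q_h(\chi_a f)$ well-defined as an element of $B_0(X)$ one needs $\chi_a f \in B_0(X)$. When $a \in C_X$ this is automatic, since $B_0(X)$ is a $C^*$-subalgebra containing $\chi_a$. For more general Borel $a$, the compact support hypothesis allows one to extend $Q_h$ canonically to $B_b(X)$ via integration against $\{A_h\}$, after which $Q_h(\chi_a f) = \int_X \chi_a f\, dA_h$ is unambiguous and the equality $\int_X \chi_a f\, dA_h = \int_X f\, dA_h^a$ becomes a direct consequence of the definition $A_h^a(b) = A_h(a \cap b)$.
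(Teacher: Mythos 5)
Your proposal is correct and follows essentially the same route as the paper: both reduce to the characteristic-function identity $A_h^a(b)=A_h(a\cap b)=Q_h(\chi_a\chi_b)$ and then pass to general $f\in B_0(X)$ via the integral representation, the paper doing so by directly writing the change of measure $dA_h^a=\chi_a\,dA_h$ where you instead spell out the extension through simple functions and density. Your remark about needing $\chi_a f\in B_0(X)$ (automatic for $a\in C_X$, requiring the extension to $B_b(X)$ otherwise) is a legitimate technical point that the paper passes over silently, but it does not change the argument.
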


\begin{proof}  Since ${\left\{A_h\right\}}_{h\in \left.(0,1\right]}:{\Sigma }_X\to B\left(H\right)$ is an asymptotic spectral measure having compact support, by Proposition \ref{prop 3.13} follows that ${\ \left\{A^a_h\right\}}_{h\in \left.(0,1\right]}:{\Sigma }_X\to B(H)$, given by relation $A^a_h\left(b\right)=A_h\left(a\cap b\right)$ for  $\forall b\in \ B$ and $\forall h\in \left.(0,1\right]$, is an asymptotic spectral measure. In addition, by Remark of Proposition \ref{prop 3.14}, results that $\ \left\{A^a_h\right\}$ has a compact support.

\noindent Taking into account the following relation ${\chi }_{a\cap b}={\chi }_a{\chi }_b$, we have

\[A^a_h\left(b\right)=A_h\left(a\cap b\right)=Q_h({\chi }_{a\cap b})={Q_h(\chi }_a{\chi }_b)=Q^a_h({\chi }_b),\]

\noindent $\forall b\in {\rm \ }B\ {\rm \ }$ and  $\forall h\in \left.(0,1\right].\ $

\noindent Let$f\in B_0{\rm (}{\mathbb C}{\rm )}$. Since $\left\{Q_h\right\}$ is the corresponding positive asymptotic morphism of asymptotic spectral measure ${\rm \ }\left\{A_h\right\}$ and having in view that

\[A^a_h(x)={\chi }_a(x)A_h(x),\] 

\noindent we have that

\[Q^a_h\left(f\right)=\ \int_X{f\left(x\right){\chi }_a(x)dA_h(x)}=\]
\[= \int_a{f(x)dA_h(x)}=\int_X{f\left(x\right)d{\chi }_a(x)A_h(x)}=\ \int_X{f(x)dA^a_h(x)}.\] 

\end{proof}

\begin{remark} Even if $\left\{Q_h\right\}$ is a positive asymptotic morphism, $\left\{Q^a_h\right\}$ is not necessary a positive asymptotic morphism, because $Q^a_h\left(1\right)=Q_h\left({\chi }_a\right)$, $\forall h\in \left.(0,1\right]$.
\end{remark}

\begin{corollary} Let ${\left\{A_h\right\}}_{h\in \left.(0,1\right]}:{\Sigma }_X\to B\left(H\right)$ and ${\ \left\{A^a_h\right\}}_{h\in \left.(0,1\right]}:{\Sigma }_X\to B(H)$ as in the preceding Proposition. Then

\[{\mathop{\lim }_{h\to 0} \left\|\int_X{f(x)dA^a_h(x)}-A_h(a)\int_X{f(x)dA_h(x)}\right\|\ }=0.\] 

\noindent $\forall f\in B_0\left(X\right)$.
\label{corollary 5.6}
\end{corollary}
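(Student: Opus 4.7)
The plan is to reduce the displayed limit directly to the asymptotic multiplicativity built into Definition \ref{def 2.1}, using the dictionary between the asymptotic spectral measure and its associated positive asymptotic morphism.

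First I would rewrite the two integrals in terms of $Q_h$. By the asymptotic Riesz correspondence (Theorem \ref{theorem 4.3}) applied to $\{A_h\}$, we have $\int_X f(x)\, dA_h(x) = Q_h(f)$ for every $f \in B_0(X)$, and in particular $A_h(a) = Q_h(\chi_a)$ when $\chi_a \in B_0(X)$ (which is ensured because $\{A_h\}$ has compact support, so only the intersection of $a$ with a fixed compact set is relevant). By Proposition \ref{prop 5.5}, the corresponding asymptotic morphism of $\{A^a_h\}$ satisfies
\[
\int_X f(x)\, dA^a_h(x) = Q^a_h(f) = Q_h(\chi_a f), \quad \forall f \in B_0(X).
\]

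Substituting these identifications, the quantity whose norm we must estimate becomes
\[
\int_X f\, dA^a_h - A_h(a)\int_X f\, dA_h = Q_h(\chi_a f) - Q_h(\chi_a)\, Q_h(f).
\]
Now the conclusion is immediate from item 3 of Definition \ref{def 2.1} (asymptotic multiplicativity of $\{Q_h\}$) applied to the pair $\chi_a, f \in B_0(X)$:
\[
\lim_{h\to 0}\bigl\|Q_h(\chi_a f) - Q_h(\chi_a)Q_h(f)\bigr\| = 0.
\]

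The only subtle point, which I would handle up front, is the justification that $\chi_a \in B_0(X)$ so that Definition \ref{def 2.1}(3) is legally applicable. Since $\{A_h\}$ has compact support, one may replace $a$ by $a \cap K$ for the fixed compact set $K$ containing all $\mathrm{spec}(A_h)$; this leaves both $A_h(a)$ and $A^a_h$ unchanged (as $A_h$ vanishes outside $K$) and places $\chi_a$ in the algebra $B_0(X)$ generated by characteristic functions of pre-compact open sets (up to the standard approximation by $\chi_U$ with $U \in C_X$). Once this technicality is dispatched, no further computation is required — the corollary is essentially a translation of the asymptotic morphism axiom.
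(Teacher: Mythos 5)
Your proposal is correct and follows essentially the same route as the paper: both rewrite $\int_X f\,dA^a_h$ as $Q^a_h(f)=Q_h(\chi_a f)$ and $A_h(a)\int_X f\,dA_h$ as $Q_h(\chi_a)Q_h(f)$ via Proposition \ref{prop 5.5}, then invoke the asymptotic multiplicativity axiom of Definition \ref{def 2.1}. Your extra care about why $\chi_a$ lies in $B_0(X)$ is a point the paper silently glosses over, but it does not change the argument.
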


\begin{proof} Let  ${\left\{Q_h\right\}}_{h\in \left.(0,1\right]}:B_0(X)\ \to B\left(H\right)$ the corresponding positive asymptotic morphism of $\left\{A_h\right\}$ and $\left\{Q^a_h\right\}$ given by $Q^a_h\left(f\right)=Q_h\left({\chi }_af\right)$, $\forall f\in B_0\left(X\right)$. Then

\[{\mathop{\overline{\lim }}_{h\to 0} \left\|\int_X{f(x)dA^a_h(x)}-A_h(a)\int_X{f(x)dA_h(x)}\right\|\ }={\mathop{\overline{\lim }}_{h\to 0} \left\|Q^a_h(f)-{A_h(a)Q}_h(f)\right\|\ }=\]
\[={\mathop{\overline\lim }_{h\to 0} \left\|Q_h({\chi }_af)-{Q_h({\chi }_a)Q}_h(f)\right\|\ }=0.\] 

\noindent $\forall f\in B_0\left(X\right)$.
\end{proof}

\begin{corollary} Let ${\left\{Q_h\right\},\ \left\{P_h\right\}}_{h\in \left.(0,1\right]}:B_0(X)\ \to B\left(H\right)$ be two positive asymptotic morphisms and ${\ \left\{Q^a_h\right\},\ \ \left\{P^a_h\right\}}_{h\in \left.(0,1\right]}:B_0\to B(H)$as in Proposition 5.5. Then ${\ \left\{Q^a_h\right\},\ \ \left\{P^a_h\right\}}_{h\in \left.(0,1\right]}$ are asymptotically equivalent if and only if ${\ \left\{Q^a_h\right\},\ \left\{P^a_h\right\}}_{h\in \left.(0,1\right]}$ are asymptotically equivalent for any $a$\textit{ $\in $ B .}
\label{corollary 5.7}
\end{corollary}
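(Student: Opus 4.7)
The plan is to reduce the statement---whose only sensible reading is that $\{Q_h\},\{P_h\}$ are asymptotically equivalent if and only if $\{Q_h^a\},\{P_h^a\}$ are asymptotically equivalent for every $a\in B$---to the analogous result for asymptotic spectral measures (Proposition~\ref{prop 3.15}), pushed through the Riesz correspondence of Theorem~\ref{theorem 4.3} together with its refinement in Theorem~\ref{theorem 5.3} and the identification supplied by Proposition~\ref{prop 5.5}.

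For the forward implication, I would first apply Theorem~\ref{theorem 5.3} to $\{Q_h\},\{P_h\}$: the corresponding asymptotic spectral measures $\{A_h\},\{B_h\}$, defined by $A_h(\Delta)=Q_h(\chi_\Delta)$ and $B_h(\Delta)=P_h(\chi_\Delta)$, are asymptotically equivalent. Then Proposition~\ref{prop 3.15} yields, for each fixed $a\in B$, that $\{A_h^a\},\{B_h^a\}$ are asymptotically equivalent. By Proposition~\ref{prop 5.5}, $Q_h^a(f)=\int_X f\,dA_h^a$ and $P_h^a(f)=\int_X f\,dB_h^a$, so for a simple function $f=\sum_{i=1}^n\alpha_i\chi_{\Delta_i}\in S_0(X)$ the triangle inequality gives
\[
\|Q_h^a(f)-P_h^a(f)\|\le\sum_{i=1}^n|\alpha_i|\,\|A_h^a(\Delta_i)-B_h^a(\Delta_i)\|,
\]
whose right-hand side tends to $0$. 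I would then extend to arbitrary $f\in B_0(X)$ by the density of $S_0(X)$ in $B_0(X)$ together with uniform norm bounds on $Q_h^a, P_h^a$, exactly as in the proof of Theorem~\ref{theorem 5.3}.

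The reverse implication is immediate: specialising to $a=X$, Proposition~\ref{prop 5.5} gives $Q_h^X(f)=Q_h(\chi_X f)=Q_h(f)$ and similarly $P_h^X(f)=P_h(f)$, so asymptotic equivalence of the pair $\{Q_h^X\},\{P_h^X\}$ is literally asymptotic equivalence of $\{Q_h\},\{P_h\}$.

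The main obstacle I anticipate is only technical: although $\{Q_h^a\}$ need not itself be a positive asymptotic morphism (see the remark after Proposition~\ref{prop 5.5}), so Theorem~\ref{theorem 5.3} cannot be invoked directly on $\{Q_h^a\},\{P_h^a\}$, the asymptotic equivalence is merely a statement about norm convergence of $Q_h^a(f)-P_h^a(f)$, and the integral representation of Proposition~\ref{prop 5.5} remains available throughout. The extension from simple functions to Borel functions can therefore be carried out by hand in the same manner as in the proofs of Proposition~\ref{prop 5.1}(iii) and Theorem~\ref{theorem 5.3}, so no essentially new argument is needed.
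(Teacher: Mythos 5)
Your proposal is correct and follows essentially the same route as the paper, whose entire proof is the one-line citation ``Applying Proposition \ref{prop 5.5} and Proposition \ref{prop 3.15}'': you pass to the associated spectral measures, invoke Proposition \ref{prop 3.15}, and return via the identification $Q_h^a(f)=Q_h(\chi_a f)=\int_X f\,dA_h^a$ of Proposition \ref{prop 5.5}, filling in the simple-function/density step that the paper leaves implicit. Your reverse implication via $a=X$ is a slightly more direct shortcut than re-running Proposition \ref{prop 3.15} backwards, but it is the same idea (evaluating at a distinguished set) and changes nothing essential.
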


\begin{proof} Applying Proposition \ref{prop 5.5}  and Proposition \ref{prop 3.15}
\end{proof}

\begin{theorem} Let ${\left\{A_h\right\}}_{h\in \left.(0,1\right]}:{\Sigma }_X\to B(H)$ be an asymptotic spectral measure having property $A_h(C_X)\ \subset \ {\mathcal B}\ \forall h\in \left.(0,1\right]$ and ${\left\{Q_h\right\}}_{h\in \left.(0,1\right]}:B_0(X)\ \to B\left(H\right)$ the corresponding positive asymptotic morphism. Then

\[spec\left(\left\{A_h\right\}\right)=supp(\left\{Q_h\right\}).\] 

\label{theorem 5.8}
\end{theorem}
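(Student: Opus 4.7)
The plan is to prove the equality by passing to complements in $X$: it suffices to establish $cospec(\{A_h\}) = X \setminus supp(\{Q_h\})$, since $spec(\{A_h\}) = X\setminus cospec(\{A_h\})$ by Definition \ref{def 3.8} and taking complements yields the claim.

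For the inclusion $cospec(\{A_h\}) \subseteq X \setminus supp(\{Q_h\})$, I would fix $x \in cospec(\{A_h\})$, select an open neighborhood $a \ni x$ with $\lim_{h\to 0}\|A_h(a)\| = 0$, and set $F := X \setminus a$. To show that $F$ lies in the family intersected in Definition \ref{def 2.4}, I would take any $f \in B_0(X)$ with $supp(f) \cap F = \emptyset$, note that then $supp(f) \subseteq a$ and so $f = f\chi_a$ pointwise, and use Theorem \ref{theorem 4.3} to write $Q_h(f) = \int f\, dA_h$. The standard operator-monotone estimate for positive operator-valued measures gives
\[
-\|f\|_\infty A_h(a) \le \int \mathrm{Re}(f)\, dA_h \le \|f\|_\infty A_h(a),
\]
and analogously for $\mathrm{Im}(f)$, which yields $\|Q_h(f)\| \le 2\|f\|_\infty \|A_h(a)\| \to 0$. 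Hence $F$ is admissible, so $supp(\{Q_h\}) \subseteq F$, and since $x \notin F$ we conclude $x \notin supp(\{Q_h\})$.

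For the reverse inclusion, I would start from $x \notin supp(\{Q_h\})$, extract a closed set $F \not\ni x$ satisfying the property in Definition \ref{def 2.4}, and use local compactness of the Hausdorff space $X$ to pick a pre-compact open neighborhood $a \in C_X$ of $x$ with $\overline{a} \cap F = \emptyset$. Since $a \in C_X$, the function $\chi_a$ belongs to $B_0(X)$ by construction, and $supp(\chi_a) = \overline{a}$ is disjoint from $F$, so $\|Q_h(\chi_a)\| \to 0$. The Riesz identity $Q_h(\chi_a) = A_h(a)$ from Theorem \ref{theorem 4.3} then forces $\|A_h(a)\| \to 0$, so $a \subseteq cospec(\{A_h\})$ and in particular $x \in cospec(\{A_h\})$.

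The central technical point is the positive operator-valued measure bound $\|\int f\, dA_h\| \le 2\|f\|_\infty \|A_h(a)\|$ for $f$ supported in $a$; it reduces to the self-adjoint fact that $-B \le T \le B$ with $B \ge 0$ implies $\|T\| \le \|B\|$, applied after splitting $f$ into real and imaginary parts. The remaining steps — producing the pre-compact separating neighborhood $a$ in a locally compact Hausdorff space and identifying $Q_h(\chi_a)$ with $A_h(a)$ via the Riesz correspondence — are routine, so the argument is essentially a point-set translation between open neighborhoods of $x$ and Borel functions supported near $x$.
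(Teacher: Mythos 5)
Your proof is correct, and although it establishes the same two inclusions through the same basic dictionary $Q_h(\chi_a)=A_h(a)$, the technical execution differs from the paper's in both directions, generally to your advantage. For $supp(\{Q_h\})\subseteq spec(\{A_h\})$ the paper shows that $spec(\{A_h\})$ is itself an admissible closed set by approximating an arbitrary $f$ supported in the cospectrum by simple functions $\sum\alpha_i\chi_{\Delta_i}$ and asserting $\lim_{h\to 0}\|Q_h(\chi_{\Delta_i})\|=0$ for each piece; this needs the $\Delta_i$ to be open (or an appeal to regularity), which the paper leaves implicit. You instead localize at a point of the cospectrum and control $\|Q_h(f)\|$ for every $f$ supported in a single basic open set $a$ with $\lim_{h\to 0}\|A_h(a)\|=0$, via the positivity estimate $\left\|\int f\,dA_h\right\|\le 2\|f\|_\infty\|A_h(a)\|$; this replaces the approximation argument by a clean operator-norm bound and only ever invokes one admissible closed set $F=X\setminus a$ at a time. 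For the reverse inclusion the paper takes an open $\Delta$ disjoint from $supp(\{Q_h\})$ and immediately tests with $f=\chi_\Delta$, glossing over two points: that $\Delta$ being disjoint from the \emph{intersection} of all admissible closed sets does not by itself produce a single admissible $F$ disjoint from $supp(\chi_\Delta)$, and that $supp(\chi_\Delta)=\overline{\Delta}$ may meet $supp(\{Q_h\})$ even when $\Delta$ does not. Your pointwise argument with a pre-compact neighborhood $a$ satisfying $\overline{a}\cap F=\emptyset$ handles both issues. The only detail worth writing out is the monotonicity of the integral against a positive operator-valued measure on real functions (obtained from the scalar measures $\langle A_h(\cdot)\xi,\xi\rangle$), since your central estimate rests on it.
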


\begin{proof} Let $\Delta $ be an open set such that $supp(\left\{Q_h\right\})\cap \triangle =\emptyset $. Then

\[{\mathop{\lim }_{h\to 0} \left\|Q_h(f)\right\|\ }=0,\ \forall f\ with\ supp(f)\subset \triangle .\]

\noindent Taking $={\chi }_{\Delta }$, we have

\[{\mathop{\lim }_{h\to 0} \left\|A_h\left(\triangle \right)\right\|\ }={\mathop{\lim }_{h\to 0} \left\|Q_h({\chi }_{\Delta })\right\|\ }=0.\]

\noindent Thus  $\Delta \subset X\backslash spec(\left\{A_h\right\})$, for each open set $\Delta $ such that $supp(\left\{Q_h\right\})\cap \triangle =\emptyset $. Therefore

\[spec\left(\left\{A_h\right\}\right)\subseteq supp(\left\{Q_h\right\}).\]

\noindent Reciprocal. Let $\Delta $ be an open set such that${\rm \ }\Delta \subset X\backslash spec(A_h)$. Then

\[Q_h\left({\chi }_{\Delta }\right)=A_h\left(\triangle \right)=0.\]

\noindent Let $f\in S_0(X)$ such that $supp\left(f\right)\subset X\backslash spec(\left\{A_h\right\})$. Then there are disjoin sets ($\Delta $\textit{${}_{i}$})\textit{${}_{i=1,n}$} such that $f=\sum^n_{i=1}{{\alpha }_i{\chi }_{{\Delta }_i}}$ and ${\triangle }_i\subset X\backslash spec(\left\{A_h\right\}),\ \forall i=\overline{1,n}.$ From the previous relation, we have

\[Q_h\left(f\right)=Q_h\left(\sum^n_{i=1}{{\alpha }_i{\chi }_{{\Delta }_i}}\right)=\sum^n_{i=1}{{\alpha }_iQ_h({\chi }_{{\Delta }_i})}=0.\]

\noindent When $h\to 0$ into the prior relation, it results

\[{\mathop{\lim }_{h\to 0} \left\|Q_h(f)\right\|\ }=0,\]

\noindent $\forall f\in S_0\left(X\right)$ such that $supp\left(f\right)\subset X\backslash spec(\left\{A_h\right\})$.

\noindent Let $f\in B_0(X)$ such that $supp\left(f\right)\subset X\backslash spec(A_h)$. Then there are functions ${(f_n)}_{n\in {\mathbb N}}\subset S_0(X)$ such that $f\to f_n$ and $supp\left(f_n\right)\subset X\backslash spec(A_h),\ \forall n\in {\mathbb N}$. From the previous relation, we have

\[{\mathop{\lim }_{h\to 0} \left\|Q_h(f)\right\|\ }=0,\]

\noindent $\forall f\in B_0(X)$ such that $supp\left(f\right)\subset X\backslash spec(A_h)$. 

\noindent Therefore

\[supp(\left\{Q_h\right\})\subseteq spec\left(\left\{A_h\right\}\right).\] 

\end{proof}

\begin{corollary} Let ${\left\{Q_h\right\},\ \left\{P_h\right\}}_{h\in \left.(0,1\right]}:B_0(X)\ \to B\left(H\right)$ be two corresponding positive asymptotic morphisms having property $Q_h(C_0(X))\ \subset \ {\mathcal B}\ $ and  $P_h(C_0(X))\ \subset \ {\mathcal B}$ $\forall h\in \left.(0,1\right]$. Then
 
\[supp(\left\{Q_h\right\})=\ supp(\left\{P_h\right\}).\] 

\label{corollary 5.9}
\end{corollary}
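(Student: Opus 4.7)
The plan is to pass through the Asymptotic Riesz Representation Theorem, transport the question to the side of asymptotic spectral measures, exploit the spectrum--equivalence results already established there, and then come back through Theorem \ref{theorem 5.8}.

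First, I would note that the hypotheses $Q_h(C_0(X))\subset\mathcal{B}$ and $P_h(C_0(X))\subset\mathcal{B}$ place $\{Q_h\}$ and $\{P_h\}$ exactly in the scope of Theorem \ref{theorem 4.3}. So each of them corresponds bijectively to an asymptotic spectral measure: $\{Q_h\}\leftrightarrow\{A_h\}$ and $\{P_h\}\leftrightarrow\{B_h\}$, where $A_h(C_X),B_h(C_X)\subset\mathcal{B}$ and $Q_h(f)=\int_X f\,dA_h$, $P_h(f)=\int_X f\,dB_h$, with in particular $A_h(\Delta)=Q_h(\chi_\Delta)$ and $B_h(\Delta)=P_h(\chi_\Delta)$ for $\Delta\in\Sigma_X$.

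Next, under the standing assumption that the two \emph{corresponding} morphisms are asymptotically equivalent (which is the only nontrivial hypothesis that makes the conclusion meaningful, in parallel with Corollary \ref{corollary 5.7}), Theorem \ref{theorem 5.3} yields that $\{A_h\}$ and $\{B_h\}$ are asymptotically equivalent as spectral measures. Then Theorem \ref{theorem 3.12} gives $spec(\{A_h\})=spec(\{B_h\})$. Finally, Theorem \ref{theorem 5.8} identifies $spec(\{A_h\})=supp(\{Q_h\})$ and $spec(\{B_h\})=supp(\{P_h\})$, so that
\[
supp(\{Q_h\})=spec(\{A_h\})=spec(\{B_h\})=supp(\{P_h\}),
\]
as required.

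The only delicate point is the invocation of Theorem \ref{theorem 5.3}, whose statement requires the two asymptotic spectral measures to have compact support; this is why I would prefer, as a cleaner alternative, to argue directly from the definitions of $supp(\{Q_h\})$ and $cospec(\{A_h\})$, since the equality $A_h(\Delta)=Q_h(\chi_\Delta)$ immediately transports the vanishing condition $\lim_{h\to 0}\|Q_h(f)\|=0$ for $f$ supported in an open set $\Delta$ into $\lim_{h\to 0}\|A_h(\Delta)\|=0$, and vice versa (via the simple-function approximation used in the proof of Theorem \ref{theorem 5.8}). This avoids the compact-support hypothesis altogether and is the step I expect to require the most care in a fully written-out proof.
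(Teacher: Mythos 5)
Your proof follows essentially the same route as the paper, whose entire argument is ``By Theorem \ref{theorem 5.8} and Theorem \ref{theorem 3.12}'', i.e. the chain $supp(\{Q_h\})=spec(\{A_h\})=spec(\{B_h\})=supp(\{P_h\})$. Your two side observations --- that the statement silently assumes the measures (equivalently, the morphisms) are asymptotically equivalent, and that routing through Theorem \ref{theorem 5.3} would import an unstated compact-support hypothesis --- are correct and point at defects in the corollary's formulation rather than in your argument.
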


\begin{proof} By Theorem \ref{theorem 5.8} and Theorem \ref{theorem 3.12}.
\end{proof} 

\begin{corollary} Let ${\left\{Q_h\right\},\ \left\{P_h\right\}}_{h\in \left.(0,1\right]}:B_0(X)\ \to B\left(H\right)$ be two corresponding positive asymptotic morphisms of two compact asymptotic spectral measures. Then $\left\{Q_h\right\}$ has compact support if and only if $\left\{P_h\right\}$ has compact support.
\label{corollary 5.10}
\end{corollary}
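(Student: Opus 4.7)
The plan is to use the asymptotic Riesz representation theorem (Theorem \ref{theorem 4.3}) to name the two underlying asymptotic spectral measures, and then reduce both directions of the biconditional to the Remark immediately after Proposition \ref{prop 5.4}, which itself follows by a parameterwise application of Proposition \ref{prop 5.4}.

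Concretely, first I would invoke Theorem \ref{theorem 4.3} to let $\{A_h\}$ and $\{B_h\}$ denote the two compact asymptotic spectral measures corresponding to $\{Q_h\}$ and $\{P_h\}$ respectively. By Definition \ref{def 3.7}, the hypothesis provides compact sets $K, K' \subset X$ with $spec(A_h) \subset K$ and $spec(B_h) \subset K'$ for every $h \in (0,1]$. Next I would apply Proposition \ref{prop 5.4} pointwise in $h$ to obtain $supp(Q_h) = spec(A_h)$ and $supp(P_h) = spec(B_h)$ for every $h$. Substituting these equalities into the containments above yields $supp(Q_h) \subset K$ and $supp(P_h) \subset K'$ for all $h$, which is exactly Definition \ref{def 2.3} of compact support for the two asymptotic morphisms. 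Since both sides of the biconditional are thereby established, the statement follows.

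I do not anticipate any substantive obstacle: the argument is a direct transport of the compact-support property across the Riesz correspondence, and in fact both sides of the biconditional are verified unconditionally under the hypothesis, so the ``if and only if'' holds automatically. The only detail that needs care is that compact support of $\{Q_h\}$ requires a single compact witness $K$ valid simultaneously for all $h$, and this uniformity is preserved because Proposition \ref{prop 5.4} supplies the equality $supp(Q_h) = spec(A_h)$ separately for every $h \in (0,1]$, so the same $K$ carried by $\{A_h\}$ serves as the witness for $\{Q_h\}$ (and similarly for the $K'$ governing $\{B_h\}$ and $\{P_h\}$).
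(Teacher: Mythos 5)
Your argument is internally consistent for the statement as literally written, but it is a genuinely different route from the paper's, and the difference is telling. The paper's proof is ``By Theorem \ref{theorem 5.8} and Theorem \ref{theorem 3.12}'': it works at the level of the asymptotic notions, chaining $supp(\left\{Q_h\right\})=spec(\left\{A_h\right\})$ and $supp(\left\{P_h\right\})=spec(\left\{B_h\right\})$ (Theorem \ref{theorem 5.8}) with the invariance of $spec(\left\{\cdot\right\})$ under asymptotic equivalence (Theorem \ref{theorem 3.12}), exactly as in Corollary \ref{corollary 5.9}. Since Theorem \ref{theorem 3.12} needs the two measures to be asymptotically equivalent, the intended hypothesis evidently includes that equivalence, and the intended content is a \emph{transfer} of compactness from one family to the other. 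Your route instead applies Proposition \ref{prop 5.4} parameterwise and Definitions \ref{def 2.3} and \ref{def 3.7}, never uses any relation between $\left\{Q_h\right\}$ and $\left\{P_h\right\}$, and --- as you yourself note --- establishes both sides of the biconditional outright, rendering the ``if and only if'' vacuous. That is a warning sign that you have proved a degenerate reading of the corollary rather than the one the author is after. What your approach buys is directness: it lands exactly on the Definition \ref{def 2.3} notion of compact support with a uniform compact witness, whereas the paper's route, strictly read, only controls the compactness of the asymptotic support $supp(\left\{Q_h\right\})$ of Definition \ref{def 2.4}. One small repair either way: to invoke Proposition \ref{prop 5.4} (or Theorem \ref{theorem 5.8}) you should first note that a compact asymptotic spectral measure satisfies $A_h(C_X)\subset B$ by Proposition \ref{prop 4.2}.
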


\begin{proof} By Theorem \ref{theorem 5.8} and Theorem \ref{theorem 3.12}.
\end{proof}   

\begin{theorem} Let ${\left\{A_h\right\}}_{h\in \left.(0,1\right]}:{\Sigma }_X\to B(H)$ be an asymptotic spectral measure having compact support and  ${\ \left\{A^a_h\right\}}_{h\in \left.(0,1\right]}:{\Sigma }_X\to B(H)$, given by $A^a_h\left(b\right)=A_h\left(a\cap b\right)$ $\forall b\in \ B$ and $\forall h\in \left.(0,1\right]$. Then

\[spec\left(\left\{A^a_h\right\}\right)=\overline{a}\cap spec\left(\left\{A_h\right\}\right),\ \forall h\in \left.(0,1\right].\] 

\label{theorem 5.11}
\end{theorem}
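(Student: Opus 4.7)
The plan is to prove the claimed equality by establishing both inclusions separately, working at the level of the cospectra and exploiting the monotonicity of the positive operator-valued measures $A_h$, in the spirit of Proposition \ref{prop 3.14}.

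For the forward inclusion $spec(\{A^a_h\}) \subseteq \overline{a} \cap spec(\{A_h\})$, I would pass to complements and show that the open set $(X \setminus \overline{a}) \cup cospec(\{A_h\})$ is contained in $cospec(\{A^a_h\})$. The first piece handles $x \notin \overline{a}$: choose an open neighborhood $U$ of $x$ disjoint from $a$, so that $A^a_h(U) = A_h(a \cap U) = A_h(\emptyset) = 0$ for every $h$, which gives $x \in cospec(\{A^a_h\})$. The second piece handles $x \in cospec(\{A_h\})$: choose an open neighborhood $U \ni x$ with $\lim_{h \to 0} \|A_h(U)\| = 0$; since $a \cap U \subseteq U$ and each $A_h$ is a positive operator-valued measure, $0 \leq A_h(a \cap U) \leq A_h(U)$, hence $\|A^a_h(U)\| \leq \|A_h(U)\| \to 0$. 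Taking complements yields the claimed inclusion.

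For the reverse inclusion $\overline{a} \cap spec(\{A_h\}) \subseteq spec(\{A^a_h\})$, I would translate through the positive asymptotic morphism side. By Theorem \ref{theorem 5.8}, $spec(\{A_h\}) = supp(\{Q_h\})$ and $spec(\{A^a_h\}) = supp(\{Q^a_h\})$, while Proposition \ref{prop 5.5} gives $Q^a_h(f) = Q_h(\chi_a f)$. Given $x \in \overline{a} \cap supp(\{Q_h\})$, I would assume for contradiction that $x \notin supp(\{Q^a_h\})$, producing an open $U \ni x$ such that $\lim_{h \to 0} \|Q_h(\chi_a f)\| = 0$ for every $f \in B_0(X)$ with $supp(f) \subseteq U$. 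The target is to use $x \in \overline{a}$ to upgrade this to $\lim_{h \to 0} \|Q_h(f)\| = 0$ for some $f$ supported near $x$, contradicting $x \in supp(\{Q_h\})$.

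I expect this upgrade to be the main obstacle. Passing from asymptotic smallness of $Q_h(\chi_a f)$ to that of $Q_h(f)$ is delicate at boundary points of $a$, since the mass of $A_h$ could in principle concentrate on $X \setminus \overline{a}$ arbitrarily near $x$; indeed without some regularity input the reverse inclusion can fail at such points. The natural route is to decompose $f = \chi_a f + \chi_{X \setminus a} f$ and treat the second summand separately, using that $supp(\chi_{X \setminus a} f) \subseteq X \setminus a$ is disjoint from $a$ together with regularity of each $A_h$ to shrink $U$ so that the second piece contributes nothing asymptotically. The compact-support hypothesis on $\{A_h\}$ will be used precisely here, to ensure that the shrinking is controlled uniformly in $h$ inside a fixed compact containing $spec(\{A_h\})$, which is the technical heart of the argument.
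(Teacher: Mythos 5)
Your first half is correct and is in fact cleaner than the paper's own argument. The paper obtains $spec(\{A^a_h\})\subseteq\overline{a}$ from Proposition \ref{prop 3.14} and Remark \ref{rem 3.9}, and then proves $spec(\{A^a_h\})\subseteq spec(\{A_h\})$ by covering compact subsets of the complement of $spec(\{A_h\})$ with open sets and invoking regularity together with the asymptotic multiplicativity $\lim_{h\to 0}\|A_h(a\cap b_i)-A_h(a)A_h(b_i)\|=0$. Your pointwise argument at the level of cospectra, using only monotonicity of positive operator-valued measures ($0\le A_h(a\cap U)\le A_h(U)$, hence $\|A^a_h(U)\|\le\|A_h(U)\|$), reaches the same conclusion without regularity or multiplicativity and is preferable.

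The reverse inclusion is where your proposal stops, and the gap you flag is genuine; it is not repairable along the line you sketch. At a boundary point $x\in\partial a\cap spec(\{A_h\})$ nothing prevents the measures from placing all of their asymptotic mass on $U\setminus a$ for every neighbourhood $U$ of $x$; then $\lim_{h\to 0}\|A_h(a\cap U)\|=0$ while $\|A_h(U)\|$ does not tend to $0$, so $x\in cospec(\{A^a_h\})$ even though $x\in\overline{a}\cap spec(\{A_h\})$. The decomposition $f=\chi_af+\chi_{X\setminus a}f$ does not help, because the term you must kill is precisely $Q_h(\chi_{X\setminus a}f)$, which can be supported arbitrarily close to $x$, and compactness of $spec(\{A_h\})$ gives no uniform control over this concentration. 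Be aware that the paper's own proof of this half does not close the gap either: after announcing $supp(\{Q^a_h\})\supseteq\overline{a}\cap supp(\{Q_h\})$, it takes a closed $F\supseteq\overline{a}\cap supp(\{Q_h\})$, deduces (already illegitimately, since such an $F$ need not contain $supp(\{Q_h\})$) that $\lim_{h\to 0}\|Q_h(f)\|=0$ whenever $supp(f)\subseteq X\setminus F$, and concludes $supp(\{Q^a_h\})\subseteq F$ --- which yields $supp(\{Q^a_h\})\subseteq\overline{a}\cap supp(\{Q_h\})$, the direction already established, not the one asserted in its final line. So only the inclusion $spec(\{A^a_h\})\subseteq\overline{a}\cap spec(\{A_h\})$ is actually available without additional hypotheses on $a$ and on the behaviour of $A_h$ near $\partial a$.
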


\begin{proof} We show that

\[spec\left(\left\{A^a_h\right\}\right)\subseteq \overline{a}\cap spec\left(\left\{A_h\right\}\right).\]

\noindent By Proposition \ref{prop 3.14} we have

\[spec(A^a_h)\subset \overline{a}\cap spec(A_h).\]

\noindent By Remark \ref{rem 3.9} i), it follows

\[spec(\left\{A^a_h\right\})\subset \overline{a}.\]

\noindent Let\textit{ b} be a compact set such that $b\subset {\mathbb C}\backslash spec(\left\{A_h\right\})$. Then there are open sets ${(b_i)}_{i=\overline{1,n}\ }$ such that $b\subset \bigcup^n_{i=1}{b_i,\ \ b_i\subset {\mathbb C}\backslash spec(\left\{A_h\right\})}$ and it results

\[{\mathop{\lim }_{h\to 0} \left\|A_h\left(b_i\right)\right\|\ }=0.\]

\noindent  Since each $A_h$ is additive, we have

\[A_h\left(b\right)\le A_h\left(\bigcup^n_{i=1}{b_n}\right)=\sum^n_{i=1}{A_h\left(b_i\right)}.\]

\noindent When $h\to 0$, it follows

\[{\mathop{\lim }_{h\to 0} \left\|A_h\left(b\right)\right\|\ }\le \sum^n_{i=1}{{\mathop{\lim }_{h\to 0} \left\|A_h\left(b_i\right)\right\|\ }}=0.\] 

\noindent How

\[{\mathop{\lim }_{h\to 0} \left\|A^a_h\left(b\right)\right\|\ }={\mathop{\lim }_{h\to 0} \left\|A_h\left(a\cap b\right)\right\|\ }\le\]
\[\le \sum^n_{i=1}{{\mathop{\lim }_{h\to 0} \left\|A_h\left(a\cap b_i\right)\right\|\ }}=\sum^n_{i=1}{{\mathop{\lim }_{h\to 0} \left\|A_h\left(a)A_h(b_i\right)\right\|\ }}\le\]
\[\le \sum^n_{i=1}{{\mathop{\lim }_{h\to 0} \left\|A_h\left(b_i\right)\right\|\ }}=0, \]

\noindent it results $b\subset {\mathbb C}\backslash spec\left(\left\{A^a_h\right\}\right)\ $for each compact set \textit{b} such that $b\subset {\mathbb C}\backslash spec(\left\{A_h\right\})$. Therefore 

\[spec(\left\{A^a_h\right\})\subseteq spec\left(\left\{A_h\right\}\right)\]

\noindent (by regularity of measures $A_h$).

\noindent Reciprocal. Let ${\left\{Q_h\right\}}_{h\in \left.(0,1\right]}:B_0(X)\ \to B\left(H\right)$ be the corresponding positive asymptotic morphism of $\left\{A_h\right\}$. We show 

\[supp\left(\left\{Q^a_h\right\}\right)\supseteq \overline{a}\cap supp\left(\left\{Q_h\right\}\right),\]

\noindent where ${\left\{Q^a_h\right\}}_{h\in \left.(0,1\right]}:B_0\to B(H)$ is given by

\[Q^a_h\left(f\right)=Q_h\left({\chi }_af\right).\]

\noindent Let $a$ be a set such that  $\overline{a}\cap supp\left(\left\{Q_h\right\}\right)\subset F$. Let $f\in B_0\left(X\right)$ such that $supp\left(f\right)\subset X\backslash F$. Then $supp\left(f\right)\bigcap F=\emptyset $ and thus ${\mathop{\lim }_{h\to 0} \left\|Q_h\left(f\right)\right\|\ }=0.\ $

\noindent Let ${{\rm \ }\left\{Q^a_h\right\}}_{h\in \left.(0,1\right]}:B_0\to B(H)$ given by

\noindent 

\noindent $Q^a_h\left(f\right)=Q_h({\chi }_af)$, $\forall f\in B_0(X)$ and $\forall h\in \left.(0,1\right]$. 

\noindent 

\noindent The, form the previous relation and since ${\mathop{\lim }_{h\to 0} \left\|Q_h\left({\chi }_a\right)\right\|\ }\le 1$, we have

\[{\mathop{\lim }_{h\to 0} \left\|Q^a_h\left(f\right)\right\|\ }={\mathop{\lim }_{h\to 0} \left\|Q_h\left({\chi }_af\right)\right\|\ }\le {\mathop{\lim }_{h\to 0} \left\|Q_h\left({\chi }_a\right)Q_h\left(f\right)\right\|\ }\le\]
\[\le {\mathop{\lim }_{h\to 0} \left\|Q_h\left({\chi }_a\right)\right\|\ }{\mathop{\lim }_{h\to 0} \left\|Q_h\left(f\right)\right\|\ }\le {\mathop{\lim }_{h\to 0} \left\|Q_h\left(f\right)\right\|\ }=0,\]

\noindent $f\forall \in B_0\left(X\right)$ such that $supp\left(f\right)\subset X\backslash F$. Thus cã $supp\left(\left\{Q^a_h\right\}\right)\subset F$. 

\noindent By Theorem \ref{theorem 5.8}  we have

\[spec\left(\left\{A_h\right\}\right)=supp(\left\{Q_h\right\}),\] 

\noindent thus

\[\overline{a}\cap spec(\left\{A_h\right\})\subseteq spec\left(\left\{A^a_h\right\}\right).\] 

\end{proof}

\noindent Bibliography

\noindent 

\noindent 1. A. Connes and N. Higson, \textit{Deformations, morphismes asymtptiques et K-theorie bivariante}, C. R. Acad. Sci. Paris, Ser. 1 311 (1990), pp. 101--106.

\noindent 2. Dadarlat, M., A Note on Asymptotic Homomorphisms. K-Theory, 8(5); 465-482, 1994.

\noindent 3. D. Martinez and J. Trout,\textbf{ }\textit{Asymptotic spectral measures, quantum mechanics, and E -- theory}, Comm. Math. Phys, vol. 226, pp. 41--60, 2002.

\noindent 4. J. Trout,\textbf{ }\textit{Asymptotic spectral measures: Between Quantum Theory and E-theory}, Comm. Math. Phys.

\noindent 5. S.K. Berberian, \textit{Notes on Spectral Theory}. Princeton: Van Nostrand, 1966.

\noindent 

\noindent 
\\
\\

\end{document}